\newtheorem{lemma}{Lemma}[section]
\newtheorem{thm}{Theorem}[section]
\newtheorem{corollary}{Corollary}[section]
\theoremstyle{remark}
\newtheorem{remark}{Remark}[section]
\numberwithin{equation}{section}
\titleformat{\section}[hang] 
  {\normalfont\Large\bfseries}{\thesection}{1em}{} 
\begin{document}
\tolerance=1000  
\sloppy  
\title[]{Modified Kantorovich-type Sampling Series in Orlicz Space Frameworks.}
\maketitle
\begin{center}
\text{Pooja Gupta} \vskip0.2in
Department of Mathematics,\\ J. C. Bose University of Science
and Technology, YMCA,\\ Faridabad-121 006, India\\[0pt]
\textbf{E-mail:} \url{poojaguptaiitr@gmail.com}.
\end{center}
\begin{abstract}
This study examines a modified Kantorovich approach applied to generalized sampling series. The paper establishes that the approximation order to a function using these modified operators is atleast as good as that achieved by classical methods by using some graphs. The analysis focuses on these series within the context of Orlicz space \( L^{\eta}(\mathbb{R}) \), specifically looking at irregularly spaced samples. This is crucial for real-world applications, especially in fields like signal processing and computational mathematics, where samples are often not uniformly spaced. The paper also establishes a result on modular convergence for functions \( g \in L^{\eta}(\mathbb{R}) \), which includes specific cases like convergence in \( L^{p}(\mathbb{R}) \)-spaces, \( L \log L \)-spaces, and exponential spaces. The study then explores practical applications of the modified sampling series, notably for discontinuous functions and provides graphs to illustrate the results.
\vspace{1em}
\par
\noindent\textbf{Key words and phrases:} Genralized sampling operators, Kantorovich-type operators, irregular sampling, $L~~\log~L$-spaces, Orlicz spaces, modular convergence.

\vspace{0.5cm}
\noindent\textbf{2000 A M S Mathematics Subject classification}: 41A25, 47A58, 41A35, 46E30, 94A12, 47B38.
\end{abstract}
\maketitle
\section{Introduction}
The development of approximation theory saw a significant milestone with the introduction of Bernstein polynomials in 1912. Using these polynomials, a constructive approach was shown for proving the Weierstrass Approximation Theorem, which asserts that any continuous function on a closed interval can be uniformly approximated by polynomials. Bernstein's formulation, which is defined as follows:
\begin{eqnarray*}
(\tau_m g)(y) &=& \sum_{i=0}^{m} g\left(\frac{i}{m}\right) \vartheta_{i,m}(y),\,\,\,\,\,\,\, \vartheta_{i,m}(y) = \binom{m}{i} y^i (1 - y)^{m - i},
\end{eqnarray*}
offered a tangible approach to approximation and inspired further exploration into approximating operators. S.N. Bernstein \cite{BT} showed that 
\begin{eqnarray}
    \displaystyle \lim_{n\rightarrow{\infty}}||\tau_ng-g||_{C[0,1]}:=\displaystyle \lim_{n\rightarrow{\infty}} \sup_{y\in [0,1]}|\tau_ng(y)-g(y)|=0.
\end{eqnarray}
Following this, the focus shifted to optimal approximations, with Chebyshev polynomials playing a pivotal role in minimizing the maximum error of approximation. These developments laid the groundwork for broader research, including generalizations like Kantorovich operators and applications in function spaces such as $L^p$ and Orlicz spaces. Kantorovich operators are a class of mathematical operators used in approximation theory. They generalize Bernstein polynomials by incorporating integration, making them suitable for approximating functions that are not necessarily continuous but belong to broader function spaces, such as $L^p$ spaces. These are typically defined for functions \( f \) belonging to \( C[0,1] \) and \( L^p(0,1) \), where \( x \) lies in the interval \( 0 \leq x \leq 1 \), as follows:
\begin{eqnarray*}
    (\upsilon_mg)(y):=\displaystyle\sum_{i=0}^m \bigg\{(m+1)\int_{\frac{i}{(m+1)}}^{\frac{i+1}{(m+1)}} g(u) du\bigg\} \vartheta_{i,m}(y),
\end{eqnarray*}
These polynomials $\upsilon_m(f)$ were introduced by L.V. Kantorovich in 1930 \cite{LV}, and G.G. Lorentz later demonstrated their properties in 1937 \cite{LR}. Kantorovich proved that 
\begin{eqnarray*}
    \displaystyle \lim_{m\rightarrow{\infty}}||\upsilon_mg-g||_{L^p(0,1)}:=\displaystyle\lim_{m\rightarrow{\infty}} 
\left( \int_{0} ^{1} \big| \upsilon_m g(y) - g(y) \big|^p \, dy \right)^{1/p} = 0.
\end{eqnarray*}
\hspace{2em} Now, the Weierstrass approximation theorem establishes that any continuous function on a closed and bounded interval can be approximated in a uniform manner by polynomials. However, many real-world problems require approximation on the entire real line e.g., in signal processing, where functions often have infinite support. Generalized sampling series provides a framework for such approximations on the entire real line $\mathbb{R}$. These series are defined for $f\in C(\mathbb{R})$ as follows:
\begin{eqnarray}
    (T_w^{\chi}g)(x):=\displaystyle\sum_{i=-\infty}^{i=\infty}g\left(\frac{i}{w}\right)\chi(wx-i)\,\,\,\,\,\,(x\in \mathbb{R})
\end{eqnarray}
provided a suitable kernel function , $\chi \in L^{1}(\mathbb{R})\cap C(\mathbb{R})$ is used. This series approximates $f$ by sampling it at equally spaced points $i/w$ on the real line and combining these values with the weights provided by the kernel $\chi.$ It generalizes classical sampling theory to more flexible kernels and can handle irregular or non-bandlimited signals, depending on the choice of the kernel. 
Generalized sampling theory extends classical approaches by utilizing more adaptable kernels, enabling the approximation of irregular and non-bandlimited signals. The original work on generalized sampling series, focusing on a specific kernel, was presented by M. Thesis \cite{MTH} in 1919. However, the broader exploration of these series for arbitrary kernel functions began in Aachen in 1977 and has since been extensively developed, with significant contributions in various fields such as signal processing, approximation theory and numerical analysis. The flexibility of choosing different kernels in this framework has allowed for advancements in handling more complex, real-world data, where traditional, uniform sampling methods fall short.
\par
\hspace{2em}Here, the operators $T_w^{\chi}$ are bounded and linear, mapping $C(\mathbb{R})$ into itself. These operators possess a finite operator norm $||T_w^{\chi}||_{[C(\mathbb{R}),C(\mathbb{R})]}=\sup_{u\in \mathbb{R}}\displaystyle\sum_{i\in \mathbb{Z}}|\chi(u-i)|$ and 
\begin{eqnarray*}
    \displaystyle\lim_{w\rightarrow \infty}||T_w^{\chi}g-g||_{C(\mathbb{R})}=0.
\end{eqnarray*}
These operators often assume a certain degree of smoothness or continuity in the function $g$. If the target function does not possess these properties, the approximation quality may deteriorate considerably. A particularly insightful example illustrates this limitation: Let \( g\) be a function such that \( g(m) = 2 \) for all \( m \in \mathbb{Z} \), and suppose that \( g \in L^p(\mathbb{R}) \). Then, for \( x \in \mathbb{R} \) and \( w = 1 \), we have \( (T_1^\chi g)(x) := 2 \), since \( \sum_{i=-\infty}^{\infty} \chi(x - i) = 1 \). However, \( (T_1^\chi g)(x) \) does not belong to \( L^p(\mathbb{R}) \).

 \par
\hspace{2em}C. Bardaro and collaborators \cite{CB} addressed this issue by modifying existing operators and introducing a new type of generalized sampling series based on the Kantorovich framework. They replaced the values $g\left(\frac{i}{w}\right)$ by an average of $g$ namely $w\int_{i/w}^{(i+1)/w}$. This adjustment allows for greater flexibility in handling non-uniformly spaced data points. The series they developed can be expressed as follows:
 \begin{eqnarray}\label{eq1}
  (S_w^{\chi}g)(y):=\displaystyle\sum_{i=-\infty}^{i=\infty}w\left(\int_{i/w}^{i+1/w}g(u)du\right)\chi(wy-i)\,\,\,\,(y\in \mathbb{R})
  \end{eqnarray}
  for $g\in L^p(\mathbb{R}),1\leq p<\infty$ and they proved that
  \begin{eqnarray*}
      \displaystyle\lim_{w\rightarrow \infty}||S_wg-g||_{L^p(\mathbb{R})}
      = \displaystyle\lim_{w\rightarrow \infty}||S_wg-g||_p=0.
  \end{eqnarray*}
We can see that the operator $(\ref{eq1})$ involved integration over the interval $[0,1]$, with \( g \) evaluated at \( \frac{i + t}{n+1}, \,\,0\leq t\leq 1 \). To standardize the operator and preserve linearity in \( g \), we modify the integration to a fixed interval \( [0, 1] \), with the argument of \( g \) transformed nonlinearly by \( \frac{i + t^\alpha}{n+1} \) and define a more general family of operators, $T_w$. This adjustment captures more complex features of \( g \) while maintaining linearity. The nonlinear term \( t^{\alpha}\) adapts to local variations, making it well-suited for signals with rapid changes.
\par
\hspace{2em}A key result presented in Corollary $\ref{col2}$ of this paper asserts that for each $g\in L^p(\mathbb{R})$ with $1\leq p<\infty,$ the following holds:
\begin{eqnarray*}
    \displaystyle\lim_{w\rightarrow \infty}||T_wg-g||_{L^p(\mathbb{R})}
      = \displaystyle\lim_{w\rightarrow \infty}||T_wg-g||_p=0.
\end{eqnarray*}
In this work, we broaden the scope of our analysis beyond classical \( L^p(\mathbb{R}) \)-spaces by considering Orlicz spaces defined by convex \( \eta \)-functions. This approach offers a more unified framework for studying the convergence properties of the operators under consideration. Orlicz spaces serve as a broader framework that extends \( L^p \)-spaces and includes a diverse range of other function spaces. Among these are the \( L^{\alpha} \log^{\beta} L \)-spaces and exponential Orlicz spaces. The \( L^{\alpha} \log^{\beta} L \)-spaces are characterized by the \( \eta \)-function \( \eta_{\alpha, \beta}(t) = t^{\alpha} \log^{\beta}(e + t) \) with \( \alpha \geq 1 \) and \( \beta > 0 \), while exponential Orlicz spaces are defined using the \( \eta_{\alpha}(t) = e^{t^\alpha} - 1 \) for \( \alpha > 0 \).
 In various branches of analysis, these functions are of great importance, especially in the study of differential equations, harmonic analysis, and the theory of operator algebras. This extended framework allows for a more comprehensive treatment of functions with varying growth behaviors and provides a flexible tool for analyzing operator convergence in diverse settings.
\par
\hspace{2em}The \( L^{\alpha} \log^{\beta} L \)-spaces are extensively explored in connection with the \emph{Hardy-Littlewood maximal function} which is a fundamental tool in harmonic analysis. These spaces, which generalize \( L^p \)-spaces by incorporating logarithmic factors, are particularly useful for analyzing functions with growth behaviors that exceed those captured by traditional \( L^p \)-norms. The properties of these spaces enable precise control over maximal inequalities, offering important insights into the behavior of averages of functions.

On the other hand, \emph{exponential Orlicz spaces} play a key role in the theory of \emph{Sobolev space embeddings} (see, e.g., \cite{SE,RLM,DE,LL}) which are central to understanding the regularity of solutions to partial differential equations (PDEs). Exponential spaces, defined by \( \eta_{\alpha}(t) = e^{t^\alpha} - 1 \), provide a natural framework for embedding Sobolev spaces into other functional spaces. These embeddings are essential for studying the regularity and growth properties of solutions to elliptic and parabolic PDEs, facilitating the analysis of their behavior in a variety of contexts.The theory has been extensively developed through the contributions of multiple researchers; for detailed discussions, refer to [9, 10, 24, 32, 33, 35]. Several authors have contributed to the development of the theory of sampling Kantorovich operators see, for example \cite{AAG,ACAR,BTP,CT,CP,CTM,CADE,KUM}.
\par
In this work, we further investigate \( L^p \)-convergence in the context of sampling sums, a topic first explored in \cite{BA}. 
\section {Preliminaries}
We define \( C(\mathbb{R}) \) as the space of uniformly bounded functions \( g: \mathbb{R} \to \mathbb{R} \), equipped with the sup-norm \( ||g||_{\infty} \). The subspace \( C_c(\mathbb{R}) \subset C(\mathbb{R}) \) includes those functions that have compact support. In addition, \( M(\mathbb{R}) \) represents the collection of all measurable functions on \( \mathbb{R} \).

Let \( \eta: \mathbb{R}_0^+ \to \mathbb{R}_0^+ \) be a convex \( \eta \)-function, for which the following condition holds:

\begin{enumerate}[label=\arabic*.]
    \item \( \eta(0) = 0 \), \( \eta(x) > 0 \) for all \( x > 0 \), and \( \eta \) is non-decreasing on \( \mathbb{R}_0^+ \).
\end{enumerate}

Next, we define the functional
\[
I^{\eta}[h] := \int_{\mathbb{R}} \eta(|h(y)|) \, dy \quad \text{for} \quad h \in M(\mathbb{R}).
\]
As established in the literature (see, for example, \cite{MLE,BDL}), the functional \( I^{\eta} \) is convex and modular on \( M(\mathbb{R}) \). The Orlicz space related to \( \eta \) is defined as:

\[
L^{\eta}(\mathbb{R}) = \{ h \in M(\mathbb{R}) : I^{\eta}[\lambda h] < \infty \text{ for some } \lambda > 0 \}.
\]

The space \( L^{\eta}(\mathbb{R}) \) is a vector space, with the corresponding subspace defined as:

\[
E^{\eta}(\mathbb{R}) = \{ h \in M(\mathbb{R}) : I^{\eta}[\lambda h] < \infty \text{ for all } \lambda > 0 \}.
\]

This is referred as the space of finite elements within \( L^{\eta}(\mathbb{R}) \). It is evident that \( C_c(\mathbb{R}) \) is a subset of \( E^{\eta}(\mathbb{R}) \).

\par
Generally, \( E^\eta(\mathbb{R}) \) is a strict subspace of \( L^\eta(\mathbb{R}) \), but the two spaces are identical if and only if the function \( \eta \) fulfills the \( \Delta_2 \)-condition. This condition ensures that we can find a constant \( M > 0 \) such that for each \( u \in \mathbb{R}_0^+ \),

\begin{eqnarray}\label{cond1}
    \eta(2u) \leq M \eta(u),
\end{eqnarray}
which guarantees that the growth of \( \eta \) does not exceed a certain bound when its argument is doubled. The \( \Delta_2 \)-condition plays a crucial role in ensuring certain regularity properties of the corresponding Orlicz spaces, particularly when it comes to the equivalence of different modes of convergence. Examples of functions \( \eta \) that satisfy this condition include the power function \( \eta(u) = u^p \) for \( 1 \leq p < \infty \), as well as the more general form \( \eta(u) = u^\alpha \log^\beta(e + u) \), where \( \alpha \geq 1 \) and \( \beta > 0 \). These functions are widely studied in the theory of Orlicz spaces, and the corresponding spaces \( E^\eta(\mathbb{R}) \) thus include important classes such as the Lebesgue spaces \( L^p(\mathbb{R}) \) and the \( L \log L \)-spaces, which are essential in functional analysis and related fields.

On the other hand, there are classes of functions, such as \( \eta(t) = e^{t^\alpha} - 1 \) for \( \alpha > 0 \), that define exponential spaces where \( E^\eta(\mathbb{R}) \) is a strict subset of \( L^\eta(\mathbb{R}) \). These exponential Orlicz spaces display distinct structural features when compared to those generated by polynomial functions. The inclusion \( E^\eta(\mathbb{R}) \subsetneq L^\eta(\mathbb{R}) \) indicates that the convergence based on the modular in \( E^\eta(\mathbb{R}) \)) is strictly weaker than that in \( L^\eta(\mathbb{R}) \), i.e. some functions may belong to \( L^\eta(\mathbb{R}) \) without being elements of \( E^\eta(\mathbb{R}) \).

In \( L^\eta(\mathbb{R}) \)-spaces, modular convergence plays a crucial role. A net of functions \( (h_w)_{w > 0} \subset L^\eta(\mathbb{R}) \) is said to converge modularly to a function \( h \in L^\eta(\mathbb{R}) \) if
\[
\lim_{w \to \infty} I^\eta[\lambda(h_w - h)] = 0
\]
for a certain \( \lambda > 0 \), where \( I^\eta \) denotes the modular functional associated with \( \eta \). This convergence captures a kind of weak convergence in the space, based on the behavior of the modular functional rather than direct pointwise or norm-based criteria.

A stronger form of convergence, norm convergence, can also be introduced. Specifically, the norm \( \| h \|_\eta \) is defined as
\[
\| h \|_\eta := \inf \left\{ \lambda > 0 : I^\eta\left(\frac{h}{\lambda}\right) \leq 1 \right\},
\]
which imposes a stricter requirement on the function than modular convergence. It is well known that norm convergence in \( L^\eta(\mathbb{R}) \) implies modular convergence and the equivalence of the two notions holds if and only if \( \eta \) satisfies the \( \Delta_2 \)-condition.
This equivalence is a key property in the theory of Orlicz spaces, ensuring that both types of convergence lead to the same topological structure in spaces where \( \eta \) is \( \Delta_2 \)-regular. 

Moreover, norm convergence and modular convergence can differ in the absence of the \( \Delta_2 \)-condition. For instance, in exponential Orlicz spaces, such as those generated by \( \eta(t) = e^{t^\alpha} - 1 \) with \( \alpha > 0 \), norm convergence is strictly stronger than modular convergence. This distinction arises due to the rapid growth of \( \eta(t) \) and the fact that the modular-based convergence in such spaces does not capture the full behavior of functions that are not norm-convergent, highlighting the intricate relationship between the modular and norm structures in different Orlicz spaces.
\section{Advanced Generalization of Kantorovich’s Sampling Series}
This section presents a set of discrete operators that characterize the Kantorovich framework for generalized sampling operators, to be analyzed in the following sections.

\par 
A function \( \chi: \mathbb{R} \to \mathbb{R} \) is classified as a \textit{kernel} when it satisfies the following properties:

\begin{enumerate}[label=\arabic*.]
    \item  \( \chi \in L^1(\mathbb{R}) \), indicating that it is integrable over \( \mathbb{R} \), and \( \chi \) remains bounded within a neighborhood around the origin. Moreover, for every \( z \in \mathbb{R} \), the following summation condition is satisfied:

    \begin{eqnarray}\label{eql1}
    \sum_{i \in \mathbb{Z}} \chi(z-i) = 1.
    \end{eqnarray}
    \item A constant \( \beta > 0 \) can be found such that:
    \begin{eqnarray}\label{eql2}
        m_{\beta}(\chi) := \sup_{z \in \mathbb{R}} \sum_{i \in \mathbb{Z}} |\chi(z - i)| |z - i|^\beta < +\infty.
    \end{eqnarray}
\end{enumerate}
Given a kernel \( \chi \), we introduce a collection of operators \( (T_w)_{w > 0} \), where for each \( w \), \( T_w \) operates on a locally integrable function  \( g: \mathbb{R} \to \mathbb{R} \) as indicated below:
\begin{eqnarray}\label{eq2}
   (T_w g)(y) = \sum_{i \in \mathbb{Z}} \chi(w y - i) \left[ \int_0^1 g \left( \frac{i + t^\alpha}{w + 1} \right) \, dt \right], \quad \text{for} \,\,\, y \in \mathbb{R}. 
\end{eqnarray}
In this case, \( g \) is assumed to be locally integrable, ensuring the sum converges for every \( y \in \mathbb{R} \). The parameter \( w > 0 \) regulates the scale of the kernel and influences the operator's behavior, while \( \alpha > 0 \) serves as a constant that shapes the function within the integral.
\par
It is evident that if \( \alpha = 1 \) is chosen in equation (\ref{eq2}), then we get the Sampling Kantorovich operators as defined in $(\ref{eq1})$.
\begin{lemma}\label{l1}
   Given the previously stated assumptions about the kernel  \( \chi \), it can be concluded that
    \begin{enumerate}[label=(\arabic*)]
    \item \label{it1} $\mu_{0,\pi}(\chi):=\sup_{z \in \mathbb{R}}\displaystyle \sum_{i \in \mathbb{Z}} |\chi(z- i)|<+\infty$
    \item  \label{it2} For every value of $\gamma>0$,
    \[
    \displaystyle\lim_{w\rightarrow \infty}\displaystyle \sum_{|wy-i|>\gamma w}|\chi(w y - i)|=0,\]
   in a way that is independent of  $y\in \mathbb{R};$
    \item \label{it3} For each pair of values \( \gamma > 0 \) and \( \epsilon > 0 \), a constant \( P > 0 \) can be found that satisfies the condition
   \[\int_{|y|>P}w|\chi(wy-i)|dy<\epsilon\]
   for sufficiently large \( w > 0 \) and \( i \) such that \( \frac{i}{w} \in [-\gamma, \gamma] \).

   \end{enumerate}
\end{lemma}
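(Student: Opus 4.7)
The plan is to leverage the two standing hypotheses on the kernel $\chi$: the weighted summability condition $m_\beta(\chi) < \infty$ from \eqref{eql2}, together with the local boundedness of $\chi$ near the origin and $\chi \in L^1(\mathbb{R})$. The common thread for all three parts is a Chebyshev-type comparison in which $|\chi(z-i)|$ is estimated by $|\chi(z-i)|\,|z-i|^\beta$ whenever $|z-i|$ is large.

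For part (1), I would split the sum over $i \in \mathbb{Z}$ according to whether $|z-i| \leq 1$ or $|z-i| > 1$. For any fixed $z \in \mathbb{R}$ there are at most three indices with $|z-i| \leq 1$, and each such term is bounded by the local $L^\infty$-bound $M$ of $\chi$ on $[-1,1]$, contributing at most $3M$. For $|z-i| > 1$ one has $|z-i|^\beta \geq 1$, so $|\chi(z-i)| \leq |\chi(z-i)|\,|z-i|^\beta$, and the tail sum is dominated by $m_\beta(\chi)$. Taking the supremum over $z$ yields $\mu_{0,\pi}(\chi) \leq 3M + m_\beta(\chi) < \infty$. For part (2), the same Chebyshev trick gives
\begin{equation*}
\sum_{|wy-i|>\gamma w}|\chi(wy-i)| \leq \frac{1}{(\gamma w)^\beta}\sum_{|wy-i|>\gamma w}|\chi(wy-i)|\,|wy-i|^\beta \leq \frac{m_\beta(\chi)}{(\gamma w)^\beta},
\end{equation*}
and the right-hand side tends to zero as $w \to \infty$, uniformly in $y \in \mathbb{R}$.

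For part (3), I would use the change of variables $u = wy - i$, so that $dy = du/w$, transforming the integral into $\int_{|u+i|>Pw}|\chi(u)|\,du$. The hypothesis $i/w \in [-\gamma, \gamma]$ forces $|i| \leq \gamma w$, so for any $P > \gamma$ the condition $|u+i| > Pw$ implies $|u| > (P-\gamma)w$. The result then reduces to the tail-integrability of $\chi$: since $\chi \in L^1(\mathbb{R})$, there exists $R > 0$ with $\int_{|u|>R}|\chi(u)|\,du < \epsilon$, and once $w$ is large enough that $(P-\gamma)w > R$ the desired bound holds uniformly over all admissible indices $i$. The only delicate point is in part (1), where one must avoid assuming global boundedness of $\chi$; the unit-radius splitting circumvents this, and parts (2)–(3) are then routine consequences of the two standing kernel hypotheses.
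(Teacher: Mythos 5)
Your proposal is correct and follows essentially the same route as the paper's proof: the Chebyshev-type comparison against the weighted sum $m_\beta(\chi)$ for parts (1) and (2), and the substitution $u = wy - i$ combined with the tail integrability of $\chi \in L^1(\mathbb{R})$ for part (3). The one small caveat is in part (1): the hypothesis only guarantees that $\chi$ is bounded on some neighborhood $[-d,d]$ of the origin, not necessarily on all of $[-1,1]$, so the split should be made at radius $d$ (as the paper does, taking $d \le 1$ without loss of generality), with the remaining annulus $d \le |z-i| \le 1$ absorbed into the tail estimate via $|\chi(z-i)| \le d^{-\beta}\,|\chi(z-i)|\,|z-i|^{\beta}$.
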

   \begin{proof}
Consider the interval $[-d, d]$ where $|\chi(z)| \leq C$ for some constant $C > 0$ and for all $z \in [-d, d]$. For ease of notation, we assume that \( d \leq 1 \). We define
       \[
       \displaystyle \sum_{i \in \mathbb{Z}} |\chi(z- i)|=\left(\displaystyle \sum_{|z-i|\leq d}+\displaystyle \sum_{|z-i|>d}\right)|\chi(z- i)|=S_1+S_2
       \]
       Since $d \leq 1$, the sum in $S_1$ includes at most two terms, which gives $S_1 \leq 2C$. For $S_2$, we have
       \[
       S_2\leq \frac{1}{d^{\beta}}\displaystyle\sum_{|z-i|>d}|\chi(z -i)| |z - i|^\beta \leq \frac{1}{d^{\beta}}\mu_{\beta,\pi}(\chi).
       \]
       Therefore, \ref{it1} has been established. Regarding \ref{it2}, let assume that \( \gamma > 0 \) is fixed. For every \( w > 0 \), it is easy to show that

       \[
       \displaystyle\sum_{|wy-i|>\gamma w} |\chi(wy - i)| \leq \frac{1}{(\gamma w)^{\beta}}\mu_{\beta,\pi}(\chi)\]
       Thus, the result follows. Finally, for property \ref{it3}, let $\epsilon > 0$ be given. Given that \( \chi \) is integrable over \( \mathbb{R} \), we can conclude the existence of a constant \( \widetilde{P} > 0 \) such that

       \[
       \int_{|y|>\widetilde{P}}|\chi(y)|dy <\epsilon.
       \]
     Let \( \gamma > 0 \). For each \( w \geq 1 \), define the integers \( i \) such that \( \frac{i}{w} \in [-\gamma, \gamma] \), which is equivalent to \( i \in [-w\gamma, w\gamma] \). Suppose \( P > 0 \) is chosen sufficiently large such that \( P - \gamma > \widetilde{P} \).
 Then, we have

       \begin{eqnarray*}
           \int_{|y|>P} w|\chi(wy-i)|dx &=&\left\{\int_{-\infty}^{-Pw-i}|\chi(v)|dv+\int_{Pw-i}^{\infty}|\chi(v)|dv\right\}\\
       &\leq& \int_{|v|>(P-\gamma)w}|\chi(v)dv| \leq \int_{|v|>\widetilde{P}}\chi(v)|dv< \epsilon,
       \end{eqnarray*}
       This concludes the demonstration of the lemma.
   \end{proof}
\begin{remark}\label{re1}
    \begin{enumerate}[label=(\alph*)]
        \item \label{ita} 
        By applying Lemma \ref{l1} (1), it follows that the operators \( T_w g \) are properly defined for all functions \( g \in L^\infty(\mathbb{R}) \) and this result holds universally for these functions. In particular,
        \[
        |(T_w g)(x)| \leq \mu_{0,\pi}(\chi) ||g||_{\infty}.
        \]
        \item \label{itb} 
        The condition \((\ref{eql2})\) for the kernel functions \(\chi\) holds if \(\chi(y)\) behaves asymptotically as \(o(y^{1-\beta-\epsilon})\) as \(y \to \pm\infty\), where \( \epsilon > 0 \) is a positive constant that can be taken as small as desired.

        \item \label{itc} 
        The moment condition \((\ref{eql1})\) is equivalent to the following expression for \(\mathcal{X}(i)\):
\[
\mathcal{X}(i) :=
\begin{cases}
    1 & \text{for } i = 0, \\
    0 & \text{for } i \in \mathbb{Z} \setminus \{0\}.
\end{cases}
\]
        where \(\mathcal{X}(l)\) is defined as the Fourier transform associated with \( \chi \).
of \(\chi\); see (\cite{BTS}, Lemma 4.2), given by
\[
\mathcal{X}(m) := \int_{-\infty}^{\infty} \chi(u)e^{-ilu} \, du, \quad l \in \mathbb{R}.
\]

    \end{enumerate}
\end{remark}

\section{Convergence theorems}
\begin{thm}\label{thm1}
  Let \( g\in M(\mathbb{R}) \) be a function that is bounded on \( \mathbb{R} \). For any \( y \in \mathbb{R} \) where \( g \) is continuous, the following pointwise convergence holds:
\[
\lim_{w \to \infty} (T_w g)(y) = g(y).
\]
Moreover, if \( g \in C(\mathbb{R}) \), we have uniform convergence:
\[
\lim_{w \to \infty} \| T_w g - g \|_{\infty} = 0.
\]
\end{thm}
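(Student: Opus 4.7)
The plan is to reduce the pointwise statement to a standard near/far split in the index $i$, using the moment condition of the kernel. Since $\sum_{i \in \mathbb{Z}} \chi(wy - i) = 1$ by (\ref{eql1}), I can rewrite
\begin{equation*}
(T_w g)(y) - g(y) = \sum_{i \in \mathbb{Z}} \chi(wy - i) \int_0^1 \left[ g\!\left(\tfrac{i + t^\alpha}{w+1}\right) - g(y)\right] dt,
\end{equation*}
so everything reduces to controlling the moduli of differences $g\!\left(\tfrac{i+t^\alpha}{w+1}\right) - g(y)$ weighted by $|\chi(wy-i)|$.

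For the pointwise assertion, I would fix a continuity point $y$ and $\varepsilon > 0$, and choose $\delta > 0$ so that $|u-y| < \delta$ forces $|g(u)-g(y)| < \varepsilon$. Then I would split the index set into $\mathcal{N}_w = \{i : |wy - i| \le \gamma w\}$ and its complement, where $\gamma > 0$ is to be fixed in terms of $\delta$. The key calculation on $\mathcal{N}_w$ is
\begin{equation*}
\left|\tfrac{i+t^\alpha}{w+1} - y\right| = \tfrac{|(i-wy) + (t^\alpha - y)|}{w+1} \le \tfrac{\gamma w + 1 + |y|}{w+1},
\end{equation*}
which tends to $\gamma$ as $w \to \infty$; choosing $\gamma < \delta/2$ and $w$ large enough puts every sampling node $\frac{i+t^\alpha}{w+1}$ within $\delta$ of $y$, so by continuity the near-part integrand is $<\varepsilon$. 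Combined with Lemma \ref{l1}\ref{it1} the near sum is $\le \varepsilon\, \mu_{0,\pi}(\chi)$. For the far sum I would use $|g(\cdot)-g(y)| \le 2\|g\|_\infty$ and invoke Lemma \ref{l1}\ref{it2}, which sends $\sum_{|wy-i|>\gamma w}|\chi(wy-i)| \to 0$ uniformly in $y$. Summing the two estimates and letting $\varepsilon \to 0$ yields $(T_w g)(y) \to g(y)$.

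For the uniform convergence statement I would rerun the same decomposition, now using uniform continuity of $g \in C(\mathbb{R})$ so that the single $\delta$ above works simultaneously at every $y$; the far-sum bound from Lemma \ref{l1}\ref{it2} is already uniform in $y$. The main obstacle to watch is precisely the $|y|$-term appearing in the near-part estimate $\tfrac{\gamma w + 1 + |y|}{w+1}$: it is harmless pointwise and uniform on compacta, but to push uniformity across all of $\mathbb{R}$ one needs to arrange that this ratio is dominated by $\delta$ independently of $y$, which is the delicate part and where care must be taken (e.g., by a slight refinement of the choice of $\gamma$ together with the uniform continuity modulus, or a truncation argument for $|y|$ large coupled with the kernel-decay lemma). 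This balancing of the $y$-dependent node shift against the modulus of continuity is the principal technical step of the proof.
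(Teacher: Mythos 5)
Your decomposition is the same as the paper's: the near/far split of the index set at $|wy-i|\le \gamma w$, the near sum controlled by (uniform) continuity together with Lemma \ref{l1}\ref{it1}, and the far sum by $2\|g\|_\infty$ times the tail estimate of Lemma \ref{l1}\ref{it2}. The pointwise part of your argument is complete and correct: at a fixed continuity point $y$ the extra term in
\[
\left|\frac{i+t^\alpha}{w+1}-y\right|\le \frac{|i-wy|}{w+1}+\frac{t^\alpha+|y|}{w+1}\le \gamma\,\frac{w}{w+1}+\frac{1+|y|}{w+1}
\]
vanishes as $w\to\infty$, so the sampling nodes do end up within $\delta$ of $y$ and the near sum is at most $\varepsilon\,\mu_{0,\pi}(\chi)$.

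For the uniform statement you have isolated exactly the step that the paper's own proof does not justify: the paper simply writes $|y|/(w+1)\le\gamma/3$, which for a fixed $w$ cannot hold for all $y\in\mathbb{R}$. Neither of your suggested repairs (retuning $\gamma$ against the modulus of continuity, or truncating in $|y|$ and invoking kernel decay) can close this, because the obstruction is not an estimation artifact: the kernel $\chi(wy-i)$ concentrates its mass at $i\approx wy$, while the samples are taken at $\frac{i+t^\alpha}{w+1}\approx y-\frac{y}{w+1}$, so the operator genuinely evaluates $g$ at points shifted from $y$ by about $y/(w+1)$. This shift is of order $1$ when $|y|\sim w$; for instance, with $g(x)=\sin(\pi x)$ and $y=w+1$ one finds $(T_wg)(w+1)\to g(w)=-g(w+1)$ along $w=k+\tfrac12$, so $\|T_wg-g\|_\infty\not\to 0$. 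What your argument (and the paper's) actually establishes is uniform convergence on compact subsets of $\mathbb{R}$, where $\sup|y|/(w+1)\to0$. So the gap you flagged is real; it is inherited from the paper rather than introduced by you, and it cannot be filled without either restricting to compacta or replacing $w+1$ by $w$ in the argument of $g$ in (\ref{eq2}).
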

\begin{proof}
We will concentrate on proving the second part of the theorem, as the first part can be derived easily using similar methods. Specifically, there exists a constant \( \gamma > 0 \) such that \( |g(u) - g(v)| < \epsilon \) holds whenever \( |u - v| < \gamma \). Let \( \bar{w} \) be chosen so that for all \( w \geq \bar{w} \), \( \delta w^{-1} < \gamma / 2 \). For \( w \geq \bar{w} \), we then have:
\begin{eqnarray*}
(T_wg)(y)-g(y)|&\leq& \displaystyle\sum_{i\in \mathbb{Z}}\left|\chi(wy-i)\right|\int_0^1\left|g\left(\frac{i+t^{\alpha}}{w+1}\right)-g
(y)\right|dt\\
&=& \left(\displaystyle\sum_{|wy-i|\leq w\gamma/3}+\displaystyle\sum_{|wy-i|>w\gamma/3}\right)|\chi(wy-i)|\int_0^1\left|g\left(\frac{i+t^{\alpha}}{w+1}\right)-g
(y)\right|dt\\
&=&J_1+J_2
\end{eqnarray*}
For $t\in [0,1]$, and if $|wy-i|\leq w\gamma/3$
\begin{eqnarray*}
    \left|\left(\frac{i+t^{\alpha}}{w+1}\right)-y\right|&\leq& \left|\frac{i}{w+1}-y\right|+\left|\frac{t^{\alpha}}{w+1}\right|\\
    &\leq& \left|\frac{i-wy}{w+1}\right|+\left|\frac{y}{w+1}\right|+\left|\frac{t^{\alpha}}{w+1}\right|\\
    &\leq& \frac{w\gamma/3}{w+1}+\frac{\gamma}{3}+\frac{\gamma}{3}\\
    &\leq& 3\frac{\gamma}{3}=\gamma
\end{eqnarray*}
Thus
\begin{eqnarray*}
    J_1\leq \displaystyle\sum_{|wy-i|\leq w\gamma/3}|\chi(wy-i)|\int_0^1 \epsilon dt<\mu_{0,\pi}(\chi)\epsilon
\end{eqnarray*}
    Moreover ,
    \begin{eqnarray*}
        J_2\leq ||f||_{\infty} \displaystyle\sum_{|wy-i|>w\gamma/3}|\chi(wx-i)|,
    \end{eqnarray*}
    Hence, the conclusion follows directly from property \ref{it2} in Lemma \ref{l1}.
\end{proof}
We now proceed with the proof of the following claim.
\begin{thm}\label{thma}
    Let \( g \in C_c(\mathbb{R}) \) and \( \lambda > 0 \). Then, we have the following convergence:
\[
\lim_{w \to \infty} \| T_w g - g \|_{\eta} = 0.
\]
\end{thm}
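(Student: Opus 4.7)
The plan is to prove Luxemburg norm convergence by establishing the stronger modular statement that $I^\eta[\lambda(T_w g - g)] \to 0$ as $w \to \infty$ for \emph{every} $\lambda > 0$; from the Luxemburg definition together with the convexity inequality $\eta(\alpha s) \leq \alpha\,\eta(s)$ for $\alpha \in [0,1]$, this is equivalent to $\|T_w g - g\|_\eta \to 0$ without invoking the $\Delta_2$-condition. So I fix $\lambda > 0$ and $\epsilon > 0$, and let $[-a,a]$ contain the support of $g$.

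Two structural facts drive the argument. First, Remark \ref{re1}\ref{ita} yields the uniform bound $\|T_w g\|_\infty \leq M_0 := \mu_{0,\pi}(\chi)\|g\|_\infty$, so the integrand $\eta(\lambda|T_w g - g|)$ is pointwise bounded by $\eta(\lambda(M_0 + \|g\|_\infty))$. Second, because $g$ vanishes outside $[-a,a]$ and $t^\alpha \in [0,1]$, the only integers $i$ whose term in the series defining $(T_w g)(y)$ is nonzero lie in a set $I_w \subset \mathbb{Z}$ of cardinality at most $C_a w$, with $i/w \in [-a-1, a+1]$ once $w$ is sufficiently large.

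Now choose $P > a$ and split
\[
I^\eta[\lambda(T_w g - g)] = \int_{-P}^{P} \eta(\lambda|T_w g - g|)\,dy + \int_{|y|>P} \eta(\lambda|T_w g|)\,dy =: I_1 + I_2.
\]
For $I_1$, Theorem \ref{thm1} provides uniform convergence $\|T_w g - g\|_\infty \to 0$, and continuity of $\eta$ at zero then gives $I_1 \leq 2P\,\eta(\lambda\|T_w g - g\|_\infty) \to 0$. For $I_2$, since $g$ vanishes on $|y|>P$, convexity of $\eta$ with $\eta(0)=0$ yields the reduction $\eta(\lambda|T_w g|) \leq M_0^{-1}\eta(\lambda M_0)|T_w g|$, so $I_2$ is controlled by the $L^1$ tail $\int_{|y|>P}|T_w g|\,dy$.

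The crux of the argument, and the step I expect to be the main obstacle, is this $L^1$ tail estimate. From the pointwise bound $|(T_w g)(y)| \leq \|g\|_\infty \sum_{i \in I_w}|\chi(wy-i)|$, Lemma \ref{l1}\ref{it3} (applied with $\gamma = a+1$) gives $\int_{|y|>P}|\chi(wy-i)|\,dy \leq \epsilon/w$ uniformly for $i \in I_w$, provided $P$ and $w$ are large enough. Summing and using $|I_w| = O(w)$ yields $\int_{|y|>P}|T_w g|\,dy = O(\|g\|_\infty \epsilon)$; the delicate point is that Lemma \ref{l1}\ref{it3} is calibrated precisely so that the $O(w)$ active indices are balanced by per-index tail decay of order $1/w$, otherwise the tail would not be uniformly small in $w$. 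Combining, $I_2 \leq C\epsilon$ uniformly in $w$, and letting first $w \to \infty$ (killing $I_1$) and then $\epsilon \to 0$ (shrinking $I_2$) completes the proof.
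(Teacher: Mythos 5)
Your argument is correct, and while it rests on the same two pillars as the paper's proof --- uniform convergence from Theorem \ref{thm1} on a compact set, and Lemma \ref{l1}\ref{it3} calibrated so that the $O(w)$ active indices (coming from the compact support of $g$) are offset by the $O(\epsilon/w)$ per-index tail integral --- the machinery you use to assemble them is genuinely different. The paper pushes $\eta$ inside the sum and the integral via two applications of Jensen's inequality together with Fubini--Tonelli, establishes equi-absolute continuity of the family $\int_{(\cdot)}\eta(\lambda|T_wg - g|)\,dy$ both at infinity and on sets of small measure, and then invokes the Vitali convergence theorem. You instead split $\mathbb{R}$ into $[-P,P]$ and its complement, dispatch the compact part by $I_1\le 2P\,\eta(\lambda\|T_wg-g\|_\infty)\to 0$, and on the tail linearize $\eta$ via the elementary convexity inequality $\eta(\tau x)\le\tau\eta(x)$ for $\tau\in[0,1]$ (valid here because $|T_wg|\le M_0:=\mu_{0,\pi}(\chi)\|g\|_\infty$ by Remark \ref{re1}\ref{ita}), reducing everything to an $L^1$ tail bound on $T_wg$. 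This buys you a shorter, self-contained argument that avoids Jensen and Vitali entirely; the paper's route is heavier but packages the tail control as equi-absolute continuity, a formulation that generalizes more readily (e.g.\ to settings where one only has pointwise a.e.\ convergence rather than uniform convergence). Two small points to tighten: you should take $P$ to be the larger of the constant supplied by Lemma \ref{l1}\ref{it3} and the support radius $a$ (both requirements are compatible since that lemma allows $P$ arbitrarily large), and the quantifier order is fine precisely because $P$ depends on $\epsilon$ but not on $w$, so the bound $I_2\le C\epsilon$ holds uniformly for all large $w$ before you send $w\to\infty$ in $I_1$.
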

\begin{proof}
    We now proceed to prove that for every \( \lambda > 0 \),
\[
\lim_{w \to \infty} I^{\eta}[\lambda(T_w g - g)] = 0.
\]
By applying Theorem \ref{thm1}, we conclude:
\[
\lim_{w \to \infty} \eta\left(\lambda \| T_w g - g\|_{\infty}\right) = 0,
\]
for all \( \lambda > 0 \).
Let \( \lambda > 0 \) be arbitrary, with \( \epsilon > 0 \) and \( \beta > 0 \) fixed. Suppose that the function \( g \) has its support contained within the interval \( [-\bar{\beta}, \bar{\beta}] \), and assume that \( \beta > \bar{\beta} + 1 \). Consider the interval \( [-\beta, \beta] \). Then, for \( i \) outside the range \( [-w\beta, w\beta] \), one can find a sufficiently large \( w > 0 \) such that

\[
\frac{i+t^{\alpha}}{w+1}\notin [-\bar{\beta}, \bar{\beta}]
\]
for every $t\in [0,1]$ and $\alpha>0,$ and so
\[
\int_0^1 g \left( \frac{i + t^\alpha}{w + 1} \right) \, dt=0.
\]
Now,by Lemma $\ref{l1}(\it3)$, a constant $P>0$ can be found such that 
   \[\int_{|y|>P}w|\chi(wy-i)|dx<\epsilon\]
   for $w>0$ large enough and $m$ such that $i\in [-w\beta,w\beta].$ Utilizing Jensen's inequality along with the Fubini-Tonelli theorem, we can derive the following important result:
   \begin{eqnarray*}
       &&\int_{|y|>M}\eta(\lambda|(T_wg)(y)|)dy\\
       &\leq& \displaystyle\sum_{i\in [-\beta w,\beta w]}\frac{1}{w\mu_{0,\pi}(\chi)}\eta\left[\lambda\mu_{0,\pi}(\chi)\int_0^1 \left|g \left( \frac{i + t^\alpha}{w + 1}\right| \right) \, dt\right]\int_{|y|>P}w|\chi(wy-i)|dx\\
       &\leq&\frac{\epsilon}{w\mu_{0,\pi}(\chi)}\displaystyle\sum_{i\in [-\beta w,\beta w]}\eta\left(\lambda\mu_{0,\pi}(\chi)||g||_{\infty}\right).
   \end{eqnarray*}
   As a result, the total number of terms in the series associated with the last term of the inequality is restricted by \( 2([\beta w] + 1) \), ensuring a controlled growth rate of the summation. As a result, 
   \[
   \int_{|y|>M}\eta(\lambda|(T_wg)(y)|)dy\leq \frac{2\epsilon}{\mu_{0,\pi}(\chi)}2([\beta w]+1)\eta\left(\lambda\mu_{0,\pi}(\chi)||g||_{\infty}\right).
   \]
   Consider a measurable set \( D \subset \mathbb{R} \) such that \( |D| < \frac{\epsilon}{\mu_{0,\pi}(\chi) \eta(\lambda \|g\|_{\infty})} \). Then, as indicated in Remark \(\ref{re1} \, (\it a)\),
 \[
  \int_D \eta(\lambda|(T_wg)(y)|)dy\leq \int_D \eta\left(\lambda\mu_{0,\pi}(\chi)||g||_{\infty}\right)dy<\epsilon
  \]
 Thus, the integrals
\[
\int_{(.)} \eta\left( \lambda \left| (T_w g)(y) - g(y) \right| \right) \, dx
\]
are equi-absolutely continuous, which permits the application of the Vitali convergence theorem, leading to the desired conclusion.
\end{proof}
\begin{thm}\label{thm2}
    Given any \( g \in L^{\eta}(\mathbb{R}) \), the following holds true:

    \begin{eqnarray}\label{eq5}
        I^{\eta}[\lambda T_wg]\leq \frac{||\chi||_1}{\mu_{0,\pi}(\chi)}I^{\eta}[\lambda \mu_{0,\pi}(\chi)g]\,\,\,\,\, (\lambda>0).
    \end{eqnarray}
    Specifically, \( T_w g \) is well-defined, and for any \( g \in L^{\eta}(\mathbb{R}) \), it follows that \( T_w g \in L^{\eta}(\mathbb{R}) \).
\end{thm}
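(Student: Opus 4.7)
The plan is to control the modular $I^{\eta}[\lambda T_w g]$ by two applications of Jensen's inequality followed by Fubini--Tonelli and a change of variables. Starting from the pointwise estimate
\[
|(T_w g)(y)| \leq \sum_{i \in \mathbb{Z}} |\chi(wy-i)| \int_0^1 \left| g\!\left(\frac{i+t^\alpha}{w+1}\right) \right| dt,
\]
I would observe that the weights $\alpha_i := |\chi(wy-i)|/\mu_{0,\pi}(\chi)$ satisfy $\sum_i \alpha_i \leq 1$ by part (1) of Lemma \ref{l1}. Since $\eta$ is convex with $\eta(0)=0$, Jensen's inequality for sub-probability weights (extended by a point mass at $0$) gives
\[
\eta(\lambda|(T_w g)(y)|) \leq \sum_{i \in \mathbb{Z}} \frac{|\chi(wy-i)|}{\mu_{0,\pi}(\chi)}\, \eta\!\left(\lambda\, \mu_{0,\pi}(\chi) \int_0^1 \left|g\!\left(\frac{i+t^\alpha}{w+1}\right)\right| dt\right).
\]
A second application of Jensen's inequality with $dt$ on $[0,1]$ (a probability measure) then pushes $\eta$ inside the $t$-integral.

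Next I would integrate over $y \in \mathbb{R}$, swap the order of integration and summation by Fubini--Tonelli (all integrands are nonnegative), and use $\int_{\mathbb{R}} |\chi(wy-i)|\, dy = ||\chi||_1/w$ (via the substitution $u = wy-i$) to obtain
\[
I^{\eta}[\lambda T_w g] \leq \frac{||\chi||_1}{w\, \mu_{0,\pi}(\chi)} \sum_{i \in \mathbb{Z}} \int_0^1 \eta\!\left(\lambda\, \mu_{0,\pi}(\chi) \left|g\!\left(\frac{i+t^\alpha}{w+1}\right)\right|\right) dt.
\]

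The final step is to collapse the sum--integral on the right into the single modular $I^{\eta}[\lambda \mu_{0,\pi}(\chi) g]$. For each fixed $i$ the substitution $u = (i+t^\alpha)/(w+1)$ is a bijection between $[0,1]$ and $[i/(w+1),(i+1)/(w+1)]$, and as $i$ ranges over $\mathbb{Z}$ these intervals tile $\mathbb{R}$. The hard part is the nonlinearity $t^\alpha$: the induced Jacobian $\tfrac{w+1}{\alpha}((w+1)u-i)^{1/\alpha - 1}$ is not uniform on each subinterval, so one cannot naively pull a clean factor of $w+1$ out as in the classical Kantorovich case ($\alpha=1$). I would absorb this density into the change of variables, using that it has total mass $1$ on $[0,1]$ (so it acts as a probability kernel on each tile), and then combine with the $1/w$ out front to recover the bound (\ref{eq5}). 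Finally, finiteness of the right-hand side for some $\lambda > 0$ (which exists because $g \in L^{\eta}(\mathbb{R})$) immediately yields $T_w g \in L^{\eta}(\mathbb{R})$ and in particular the almost-everywhere convergence of the defining series.
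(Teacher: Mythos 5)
Your proposal follows the same route as the paper's proof: two applications of Jensen's inequality (first over the sub-probability weights $|\chi(wy-i)|/\mu_{0,\pi}(\chi)$, then over $dt$ on $[0,1]$), Fubini--Tonelli, and the substitution $u=wy-i$ producing the factor $\|\chi\|_1/w$. Up to the displayed intermediate bound your argument matches the paper step for step.

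The gap is in the final step, which you correctly identify as the hard part but do not actually close. You need
\[
\frac{1}{w}\sum_{i\in\mathbb{Z}}\int_0^1 \eta\left(\lambda\mu_{0,\pi}(\chi)\left|g\left(\tfrac{i+t^\alpha}{w+1}\right)\right|\right)dt \;\leq\; I^{\eta}\bigl[\lambda\mu_{0,\pi}(\chi)g\bigr],
\]
and your justification --- that the induced density $\tfrac{1}{\alpha}s^{1/\alpha-1}$ has total mass $1$ on $[0,1]$ and therefore ``acts as a probability kernel'' --- does not yield it. Total mass $1$ gives equality only for constant integrands; to dominate the weighted integral by the unweighted one for an arbitrary nonnegative integrand you need the density to be bounded by $1$ pointwise, which it is not. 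For $\alpha<1$ the density is bounded by $1/\alpha$, so the argument survives only with an extra factor $\tfrac{w+1}{\alpha w}$ (note that already for $\alpha=1$ the tiles $[i/(w+1),(i+1)/(w+1)]$ have length $1/(w+1)$ while the prefactor is $1/w$, so a factor $\tfrac{w+1}{w}$ appears and the constant in \eqref{eq5} is not literally attained). For $\alpha>1$ the density blows up like $s^{1/\alpha-1}$ as $s\to 0^+$, and by choosing $g$ so that $\eta(\lambda\mu_{0,\pi}(\chi)|g(u)|)\sim((w+1)u-i)^{-\gamma}$ near $u=i/(w+1)$ with $1/\alpha\le\gamma<1$ one makes the left-hand side infinite while the right-hand side stays finite; the step then fails outright and the chain of inequalities does not establish \eqref{eq5}. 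In fairness, the paper's own proof asserts this same last inequality with no more justification than you give, so you have reproduced its argument faithfully and been more candid about where it is thin --- but as written neither version proves the stated bound for general $\alpha\neq 1$.
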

\begin{proof}
  We will consider the case where the expression \( I^{\eta} \) on the right-hand side of equation (\ref{eq5}) is finite and for convenience, we set \( \lambda = 1 \). By applying Jensen's inequality twice, we derive the following inequality:
    \begin{eqnarray*}
       &&\int_{\mathbb{R}} \eta(|(T_wg)(y)|)dy\\
        &\leq& \frac{1}{\mu_{0,\pi}(\chi)}\displaystyle\sum_{i\in \mathbb{Z}}\left\{\int_0^1 \eta\left(\mu_{0,\pi}(\chi)\left|g \left( \frac{i+ t^\alpha}{w + 1}\right)\right| \right)dt.\int_{\mathbb{R}}|\chi(wy-i)dy.\right\}
    \end{eqnarray*}
   Upon substituting \( wy - i= u \) into the last integral, we obtain:
    \begin{eqnarray*}
       &&\int_{\mathbb{R}} \eta(|(T_wg)(y)|)dy\\
        &\leq& \frac{1}{\mu_{0,\pi}(\chi)}\displaystyle\sum_{i\in \mathbb{Z}}\left\{\int_0^1 \eta\left(\mu_{0,\pi}(\chi)\left|g \left( \frac{i+ t^\alpha}{w + 1}\right)\right| \right)dt\right\}||\chi||_1\\
        &\leq&\frac{1}{\mu_{0,\pi}(\chi)}||\chi||_1 I^{\eta}[\mu_{0,\pi}(\chi) g].
        \end{eqnarray*}
\end{proof}
\begin{thm}\label{thm3}
    For every \( g \in L^{\eta}(\mathbb{R}) \), it is possible to find a constant \( \lambda > 0 \) such that the following condition is satisfied:
    \[
    \lim_{w\rightarrow \infty}I^{\eta}[\lambda(T_wg-g)]=0.
    \]
\end{thm}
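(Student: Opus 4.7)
The plan is to reduce the statement to the case already handled in Theorem \ref{thma}, via a standard three-term modular splitting combined with the modular continuity estimate from Theorem \ref{thm2}. Concretely, I would use the well-known fact that $C_c(\mathbb{R})$ is modularly dense in $L^{\eta}(\mathbb{R})$: for any $g \in L^{\eta}(\mathbb{R})$ and any $\sigma > 0$ for which $I^{\eta}[\sigma g] < \infty$, given $\varepsilon > 0$ there exists $f \in C_c(\mathbb{R})$ with $I^{\eta}[\sigma(g-f)] < \varepsilon$.

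First I would fix $\lambda_0 > 0$ such that $I^{\eta}[\lambda_0 g] < \infty$, and set $\lambda := \lambda_0 / (3\mu_{0,\pi}(\chi))$ (assuming, without loss of generality, $\mu_{0,\pi}(\chi) \ge 1$). By linearity of $T_w$ and convexity of $\eta$ (hence of $I^{\eta}$),
\begin{eqnarray*}
I^{\eta}[\lambda(T_w g - g)] &\leq& \tfrac{1}{3}\, I^{\eta}[3\lambda\, T_w(g-f)] + \tfrac{1}{3}\, I^{\eta}[3\lambda(T_w f - f)] + \tfrac{1}{3}\, I^{\eta}[3\lambda(f-g)].
\end{eqnarray*}
The middle term tends to zero as $w \to \infty$ for every choice of $f \in C_c(\mathbb{R})$ by Theorem \ref{thma}. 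For the first term, Theorem \ref{thm2} applied to $g - f \in L^{\eta}(\mathbb{R})$ yields
\[
I^{\eta}[3\lambda\, T_w(g-f)] \leq \frac{\|\chi\|_1}{\mu_{0,\pi}(\chi)}\, I^{\eta}\!\big[3\lambda\mu_{0,\pi}(\chi)(g-f)\big] = \frac{\|\chi\|_1}{\mu_{0,\pi}(\chi)}\, I^{\eta}[\lambda_0 (g-f)].
\]
Since $3\lambda \le \lambda_0$ and $\eta$ is non-decreasing, the third term is also dominated by $I^{\eta}[\lambda_0(g-f)]$ (up to the normalizing factor).

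The final step is to choose, given $\varepsilon > 0$, the approximant $f \in C_c(\mathbb{R})$ so that $I^{\eta}[\lambda_0(g-f)] < \varepsilon$, which makes the first and third terms uniformly small in $w$, and then pick $w$ large enough so the middle term is below $\varepsilon$. The main subtlety lies in the calibration of $\lambda$: it must be small enough that $3\lambda\mu_{0,\pi}(\chi)$ stays inside the range of parameters for which $g$ has finite modular, so that the density step from $L^{\eta}$ to $C_c$ can be invoked without leaving the domain of $I^{\eta}$. Once this calibration is fixed, the remaining estimates are routine applications of the convexity of $\eta$ and the results already proved. Note that modular convergence in $L^{\eta}(\mathbb{R})$ is in general weaker than norm convergence, which is why the statement is formulated with the auxiliary parameter $\lambda$ rather than with $\|\cdot\|_{\eta}$.
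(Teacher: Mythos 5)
Your proposal follows essentially the same route as the paper's proof: a three-term convexity splitting of $I^{\eta}[\lambda(T_wg-g)]$, the modular density of $C_c(\mathbb{R})$ in $L^{\eta}(\mathbb{R})$ to produce the approximant, Theorem \ref{thm2} to control $I^{\eta}[3\lambda T_w(g-f)]$, and Theorem \ref{thma} for the middle term. The only difference is cosmetic calibration of $\lambda$ (the paper requires $3\lambda(1+\mu_{0,\pi}(\chi))\leq\bar{\lambda}$, you take $\lambda=\lambda_0/(3\mu_{0,\pi}(\chi))$), which changes nothing of substance.
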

\begin{proof}
  Let \( g \in L^{\eta}(\mathbb{R}) \).There exists a constant \( \bar{\lambda} > 0 \) and a function \( h \in C_c(\mathbb{R}) \) such that the inequality \( I^{\eta}[\bar{\lambda}(g - h)] < \epsilon \) is true. Next, select a value \( \lambda > 0 \) satisfying the condition \( 3\lambda \left( 1 + \mu_{0, \pi}(\chi) \right) \leq \bar{\lambda} \). By exploiting the properties of \( \eta \) and invoking Theorem \ref{thm2}, one can deduce that:

   \begin{eqnarray*}
       &&I^{\eta}[\lambda(T_wg-g)]=0\\
       &\leq& I^{\eta}[3\lambda(T_wg-T_wh)]+I^{\eta}[3\lambda(T_wh-h)]+I^{\eta}[3\lambda(g-h)]\\
       &\leq& \frac{1}{\mu_{0,\pi}(\chi)}||\chi||_1 I^{\eta}[\bar{\lambda}(g-h)]+I^{\eta}[3\lambda(T_wh-h)]+I^{\eta}[\bar{\lambda}(g-h)]\\
       &\leq& \left(\frac{1}{\mu_{0,\pi}(\chi)}||\chi||_1+1\right)\epsilon+I^{\eta}[3\lambda(T_wh-h)].
       \end{eqnarray*}
       The conclusion is now an immediate consequence of Theorem \ref{thma}.
\end{proof}
\section{Applications to specific classes of Function Spaces}
We utilize the results derived in the notable case where \( \eta(u) = u^p \) for \( u \in \mathbb{R}_0^+ \). In this situation, we observe that \( L^{\eta}(\mathbb{R}) = E^{\eta} = L^p(\mathbb{R}) \) for \( 1 \leq p < \infty \). Within this framework, the equivalence of modular convergence and standard norm convergence holds. Based on the theory outlined in the preceding sections, we now introduce the following corollaries.
\begin{corollary}\label{col1}
    For each \( g \in L^p(\mathbb{R}) \), where \( 1 \leq p < \infty \), the following holds:
\[
    ||T_wg||_p\leq (\mu_{0,\pi}(\chi))^{(p-1)/p}||\chi|_1^{1/p}||g|_p
    \]
    \end{corollary}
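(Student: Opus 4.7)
The plan is to derive this corollary as an immediate specialization of Theorem \ref{thm2} to the $\eta$-function $\eta(u)=u^p$, $p\ge 1$. First I would note that this $\eta$ is convex, non-decreasing, and vanishes at zero, so it fits the framework of Section 2, and the induced modular is
\[
I^{\eta}[h] = \int_{\mathbb{R}} |h(y)|^p\,dy = \|h\|_p^{\,p}.
\]
In particular, as noted in the preamble of Section 5, one has $L^{\eta}(\mathbb{R}) = L^p(\mathbb{R})$, and the Luxemburg norm $\|\cdot\|_\eta$ coincides (up to the usual identification) with $\|\cdot\|_p$.

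Next, I would apply the modular inequality (\ref{eq5}) from Theorem \ref{thm2} with this $\eta$ and an arbitrary $\lambda>0$. Substituting yields
\[
\lambda^p \int_{\mathbb{R}} |(T_w g)(y)|^p\,dy \;\le\; \frac{\|\chi\|_1}{\mu_{0,\pi}(\chi)}\,\lambda^p\,(\mu_{0,\pi}(\chi))^p \int_{\mathbb{R}} |g(y)|^p\,dy.
\]
Cancelling $\lambda^p$ on both sides (which is why the freedom in $\lambda$ is irrelevant here) and simplifying the constants gives
\[
\|T_w g\|_p^{\,p} \;\le\; \|\chi\|_1\,(\mu_{0,\pi}(\chi))^{p-1}\,\|g\|_p^{\,p}.
\]
Taking $p$-th roots then produces the advertised bound. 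The case $p=1$ reduces to $\|T_w g\|_1 \le \|\chi\|_1 \|g\|_1$ since the exponent $(p-1)/p$ vanishes; this can also be read off directly from Fubini--Tonelli applied to the definition of $T_w$, so it fits the statement as a consistent endpoint.

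There is essentially no technical obstacle: the work has already been done in Theorem \ref{thm2}, whose proof used a double application of Jensen's inequality together with the substitution $u = wy-i$ to produce $\|\chi\|_1$. The only care needed is checking that $L^{\eta}=L^p$ in the present setting and tracking the constants $\mu_{0,\pi}(\chi)^{p-1}$ and $\|\chi\|_1^{1/p}$ correctly when passing from the modular form to the norm form. Should one wish to avoid invoking Theorem \ref{thm2} as a black box, the same argument can be reproduced in one line by writing $|T_w g(y)| \le \sum_i |\chi(wy-i)|\int_0^1 |g((i+t^\alpha)/(w+1))|\,dt$, applying Jensen's inequality with the probability-type weights $|\chi(wy-i)|/\mu_{0,\pi}(\chi)$, then using Jensen once more in the $t$-integral, and finally exchanging sum and integral via Fubini--Tonelli.
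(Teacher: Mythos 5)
Your proposal is correct and follows exactly the paper's route: specialize the modular inequality of Theorem \ref{thm2} to \( \eta(u)=u^p \), obtain \( \|T_wg\|_p^p \le (\mu_{0,\pi}(\chi))^{p-1}\|\chi\|_1\|g\|_p^p \), and take \(p\)-th roots. The extra remarks on the cancellation of \( \lambda \) and the \(p=1\) endpoint are consistent with, but not needed beyond, what the paper does.
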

    \begin{proof}
      By directly applying Theorem \ref{thm2} with \( \eta(u) = u^p \), we obtain the following inequality:
\[
\| T_w g \|_p^p \leq (\mu_{0, \pi}(\chi))^{p-1} \| \chi \|_1 \| g \|_p^p,
\]
thus the desired conclusion follows immediately.
\end{proof}
\begin{corollary}\label{col2}
    Consider \( g \in L^p(\mathbb{R}) \) with \( 1 \leq p < \infty \). Then, the following assertion is valid:
    \[
    \lim_{w\rightarrow \infty}||T_wg-g||_p=0.
    \]
     \end{corollary}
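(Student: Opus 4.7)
The plan is to obtain this corollary as an immediate specialization of Theorem \ref{thm3} to the $\eta$-function $\eta(u) = u^p$. With this choice, as noted at the opening of this section, $L^{\eta}(\mathbb{R})$ coincides with $L^p(\mathbb{R})$, and the modular functional reduces to
\[
I^{\eta}[h] = \int_{\mathbb{R}} |h(y)|^p \, dy = \|h\|_p^p,
\]
so the statement of the corollary is nothing more than a translation of modular convergence in $L^{\eta}(\mathbb{R})$ into the concrete $L^p$-norm statement.

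First I would invoke Theorem \ref{thm3} for the given $g \in L^p(\mathbb{R})$: there exists a constant $\lambda > 0$ such that
\[
\lim_{w \to \infty} I^{\eta}[\lambda(T_w g - g)] = \lim_{w \to \infty} \lambda^p \, \|T_w g - g\|_p^p = 0.
\]
Since $\lambda > 0$ is a fixed positive constant, dividing by $\lambda^p$ and extracting $p$-th roots yields $\|T_w g - g\|_p \to 0$. Equivalently, one may observe that $\eta(u) = u^p$ satisfies the $\Delta_2$-condition with constant $M = 2^p$, so that modular convergence and norm convergence in $L^{\eta}(\mathbb{R})$ are equivalent, and the conclusion follows at once.

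There is no substantive obstacle: the analytic work (the $C_c(\mathbb{R})$-density step and Vitali-type argument of Theorem \ref{thma}, the modular boundedness of Theorem \ref{thm2}, and the triangle-inequality decomposition of Theorem \ref{thm3}) has already been completed in the general Orlicz framework. The only remaining point is to exploit the positive homogeneity of $\eta(u) = u^p$ to remove the dependence on $\lambda$, a task which is essentially trivial.
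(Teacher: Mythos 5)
Your proposal is correct and matches the paper's intended argument: the paper states this corollary as an immediate specialization of Theorem \ref{thm3} to $\eta(u)=u^p$, using exactly the observation that the modular becomes $\lambda^p\|h\|_p^p$ (equivalently, that $\Delta_2$ holds so modular and norm convergence coincide). Your explicit computation with the homogeneity of $u^p$ fills in the one-line detail the paper leaves implicit; nothing further is needed.
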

    In particular, we analyze the function \( \eta_{\alpha, \beta}(v) = v^{\alpha} \log^{\beta}(e + v) \), where \( v \geq 0 \), \( \alpha \geq 1 \), and \( \beta > 0 \). The associated  Orlicz space is then defined as the set of functions \( f \in M(\mathbb{R}) \) for which
\[
I^{\eta_{\alpha,\beta}}[\lambda g] = \int_{\mathbb{R}} \left( \lambda |g(y)| \right)^{\alpha} \log^{\beta}(e + \lambda |g(y)|) \, dy < \infty
\]
for some \( \lambda > 0 \), and is denoted by \( L^{\alpha} \log^{\beta} L(\mathbb{R}) \). 

It is important to note that the function \( \eta_{\alpha,\beta} \) satisfies the \( \Delta_2 \)-condition (see (\ref{cond1})), which implies that \( L^{\alpha} \log^{\beta} L(\mathbb{R}) \) is identical to its finite element space \( E^{\eta_{\alpha,\beta}}(\mathbb{R}) \). Specifically, if \( g \in L^{\alpha} \log^{\beta} L(\mathbb{R}) \), then for every \( \lambda > 0 \), we have \( I^{\eta_{\alpha,\beta}}[\lambda g] < \infty \).

From the general theory, we derive the following results for the specific case where \( \alpha = \beta = 1 \).
\begin{corollary}\label{col3}
     For every $g\in L\log L$, we obtain the following:
     \begin{eqnarray*}
   && \int_{\mathbb{R}}|T_wg(y)|\log(e+\lambda|T_wg(y))|dy\\
   && \qquad \quad\quad \quad \quad \quad \quad \quad \quad \quad \quad \quad  \leq ||\chi||_1\int_{\mathbb{R}}|g(y)|\log\left(e+\lambda \mu_{0,\pi}(\chi)||g(y)|\right)dy\,\,\,\,\,\,(\lambda>0).
\end{eqnarray*}
$\text{Specifically, the operator } T_w f \text{ is well-defined, and if } g  \text{ is an element of }  L \log L, \text{then } T_w g \in L \log L.
$
\newline
Given that for Orlicz spaces fulfilling the \( \Delta_2 \) condition, norm convergence and modular convergence are equivalent, it directly follows that:

   \end{corollary}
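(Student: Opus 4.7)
The approach is to specialize the general Orlicz-space machinery from Theorems \ref{thm2} and \ref{thm3} to the particular $\eta$-function $\eta_{1,1}(u) = u\log(e+u)$, which is precisely the Young function generating $L\log L(\mathbb{R})$. Before invoking those theorems, I would record that $\eta_{1,1}$ is an admissible $\eta$-function — it vanishes at the origin, is strictly positive and non-decreasing on $(0,\infty)$, and convex — and that it satisfies the $\Delta_2$-condition: since $e+u \geq e > 2$ for all $u \geq 0$, we have $\log(e+2u) \leq \log 2 + \log(e+u) \leq 2\log(e+u)$, so $\eta_{1,1}(2u) \leq 4\,\eta_{1,1}(u)$. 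This situates $L\log L(\mathbb{R})$ as a $\Delta_2$-regular Orlicz space in which modular convergence and norm convergence coincide and in which $L^{\eta_{1,1}} = E^{\eta_{1,1}}$.

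For the modular inequality, I would apply Theorem \ref{thm2} directly with $\eta = \eta_{1,1}$. Expanding both sides of the abstract bound $I^{\eta_{1,1}}[\lambda T_w g] \leq \frac{\|\chi\|_1}{\mu_{0,\pi}(\chi)} I^{\eta_{1,1}}[\lambda \mu_{0,\pi}(\chi) g]$ gives
\begin{equation*}
\int_{\mathbb{R}} \lambda |T_w g(y)| \log\bigl(e+\lambda|T_w g(y)|\bigr)\,dy \leq \frac{\|\chi\|_1}{\mu_{0,\pi}(\chi)} \int_{\mathbb{R}} \lambda \mu_{0,\pi}(\chi) |g(y)| \log\bigl(e+\lambda \mu_{0,\pi}(\chi) |g(y)|\bigr)\,dy,
\end{equation*}
and cancelling the factor $\lambda$ on both sides together with one factor of $\mu_{0,\pi}(\chi)$ on the right reproduces exactly the inequality stated in Corollary \ref{col3}. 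Since the right-hand side is finite for every $\lambda > 0$ whenever $g \in L\log L(\mathbb{R})$ (by the $\Delta_2$-property), this simultaneously confirms that $T_w g$ is well-defined and belongs to $L\log L(\mathbb{R})$.

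The convergence conclusion then follows by invoking Theorem \ref{thm3} with $\eta = \eta_{1,1}$, which provides some $\lambda > 0$ such that $I^{\eta_{1,1}}[\lambda(T_w g - g)] \to 0$ as $w \to \infty$. Because $\eta_{1,1}$ is $\Delta_2$-regular, this modular convergence upgrades to norm convergence in $L\log L(\mathbb{R})$, yielding $\|T_w g - g\|_{\eta_{1,1}} \to 0$ for every $g \in L\log L(\mathbb{R})$. I foresee no substantive obstacle, since the argument is essentially a transcription of the general results already proved; the only points deserving mild attention are the clean verification of the $\Delta_2$-bound for $\eta_{1,1}$ and the precise tracking of the constants inside and outside the logarithm after the cancellations, so that the scaling argument $\mu_{0,\pi}(\chi)$ inside the $\log$ is preserved while the outer prefactor collapses to $\|\chi\|_1$.
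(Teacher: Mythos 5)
Your proposal is correct and matches the paper's intended derivation: the paper gives no separate proof for Corollary \ref{col3} but obtains it, exactly as you do, by specializing the modular inequality of Theorem \ref{thm2} to $\eta_{1,1}(u)=u\log(e+u)$ and cancelling the factor $\lambda$ together with one factor of $\mu_{0,\pi}(\chi)$, with the $\Delta_2$-property guaranteeing finiteness of the right-hand side. Your added verification of the $\Delta_2$-bound and the appeal to Theorem \ref{thm3} for the convergence statement are consistent with the surrounding text and introduce no deviation from the paper's route.
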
  
   \begin{corollary}\label{col4}
       For $g\in L\log L$ and for every $\lambda>0$ we have
       \[
       \int_{\mathbb{R}}|T_wg(y)-g(y)|\log(e+\lambda|T_wg(y))-g(y)|dy=0
       \]
       or equivalently,
       \[
       \lim_{w\rightarrow \infty}||T_wg-g||_{L\log L}=0.
       \]
       where \( L \log L \) refers the Luxemburg norm linked to the functional \( I^{\eta_{1,1}} \).
   \end{corollary}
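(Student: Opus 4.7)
The plan is to deduce Corollary~\ref{col4} from Theorem~\ref{thm3} by exploiting the fact that $\eta_{1,1}(v) := v\log(e+v)$ satisfies the $\Delta_2$-condition, as noted in the paragraph preceding Corollary~\ref{col3}. Theorem~\ref{thm3} only supplies modular convergence $I^{\eta_{1,1}}[\lambda_0(T_wg-g)]\to 0$ for some particular $\lambda_0 > 0$ depending on $g$, whereas Luxemburg-norm convergence $\|T_wg-g\|_{L\log L}\to 0$ is equivalent (under $\Delta_2$) to having this limit for every $\lambda > 0$. Bridging these two statements is essentially the entire content of the corollary.

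Concretely, I would first fix $g\in L\log L(\mathbb{R}) = L^{\eta_{1,1}}(\mathbb{R})$ and apply Theorem~\ref{thm3} with $\eta=\eta_{1,1}$ to obtain $\lambda_0 > 0$ with $\lim_{w\to\infty} I^{\eta_{1,1}}[\lambda_0(T_wg-g)] = 0$. Then, for arbitrary $\lambda > 0$, I would bootstrap this to $\lim_{w\to\infty} I^{\eta_{1,1}}[\lambda(T_wg-g)] = 0$. The case $\lambda\le\lambda_0$ is immediate from the monotonicity of $\eta_{1,1}$; for $\lambda > \lambda_0$, choose an integer $k$ with $2^k\lambda_0\ge\lambda$ and iterate the $\Delta_2$-inequality $\eta_{1,1}(2u)\le M\eta_{1,1}(u)$ (an explicit constant $M=4$ works, since a short computation gives $\log(e+2u)\le 2\log(e+u)$ for $u\ge 0$) to obtain
\[
I^{\eta_{1,1}}[\lambda(T_wg-g)] \le I^{\eta_{1,1}}[2^k\lambda_0(T_wg-g)] \le M^k\, I^{\eta_{1,1}}[\lambda_0(T_wg-g)] \to 0
\]
as $w\to\infty$. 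This establishes the displayed integral identity in the statement (read as a limit) for every $\lambda>0$.

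Once modular convergence is known for every $\lambda > 0$, the equivalence between modular and Luxemburg-norm convergence in $\Delta_2$-regular Orlicz spaces, recalled explicitly in the preliminaries, yields $\|T_wg-g\|_{L\log L}\to 0$. The only substantive step is the $\Delta_2$-upgrade from a single $\lambda_0$ to all $\lambda > 0$; verifying the $\Delta_2$-inequality for $\eta_{1,1}$ is a short elementary estimate, so I do not expect any genuine obstacle beyond careful bookkeeping of constants.
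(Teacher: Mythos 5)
Your proposal is correct and follows essentially the same route as the paper, which derives Corollary~\ref{col4} in one line from Theorem~\ref{thm3} together with the equivalence of modular and Luxemburg-norm convergence under the $\Delta_2$-condition. The only difference is that you explicitly carry out the $\Delta_2$-bootstrap from a single $\lambda_0$ to all $\lambda>0$ (with the correct constant $M=4$ for $\eta_{1,1}$), a step the paper leaves implicit.
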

A third case of interest involves the exponential space associated with the \( \eta \)-function defined by \( \eta_{\alpha}(v) = \exp(v^{\alpha}) - 1 \), where \( v\geq 0 \) and \( \alpha > 0 \). In this case, the Orlicz space \( L^{\eta_{\alpha}}(\mathbb{R}) \) includes functions \( g \in M(\mathbb{R}) \) for which the following condition holds:
\[
I^{\eta_{\alpha}}[\lambda g] = \int_{\mathbb{R}} \left( \exp\left( \left(\lambda |g(y)|\right)^{\alpha} \right) - 1 \right) \, dy < \infty
\]

For some \( \lambda > 0 \), since the function \( \eta_{\alpha} \) fails to satisfy the \( \Delta_2 \)-condition (see \ref{cond1}), it follows that \( L^{\eta_{\alpha}}(\mathbb{R}) \) is not equivalent to its finite element space, \( E^{\eta_{\alpha}}(\mathbb{R}) \). Furthermore, convergence in the modular sense in this space does not necessarily imply convergence in norm.
\begin{corollary}\label{col5}
    For each \( g \in L^{\eta_{\alpha}}(\mathbb{R}) \), the following inequality is satisfied:
\begin{eqnarray*}
    &&\int_{\mathbb{R}} \left( \exp\left\{ \left(\lambda |T_w g(y)|\right)^{\alpha} \right\} - 1 \right) \, dy \\
    &&\qquad \leq \frac{\| \chi \|_1}{\mu_{0,\pi}(\chi)} \int_{\mathbb{R}} \left( \exp\left\{ \left( \lambda \mu_{0,\pi}(\chi) |g(y)| \right)^{\alpha} \right\} - 1 \right) \, dy \quad (\lambda > 0).
\end{eqnarray*}
$\text{In particular, } T_w g \text{ is well-defined, and for } g \in L^{\eta_{\alpha}}(\mathbb{R}), \text{ it follows that } T_w g \in L^{\eta_{\alpha}}(\mathbb{R})$.

\end{corollary}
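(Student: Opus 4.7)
The plan is to recognize Corollary \ref{col5} as a direct specialization of Theorem \ref{thm2} to the convex $\eta$-function $\eta_\alpha(v) = \exp(v^\alpha) - 1$. Since the paper has already established the modular inequality $I^{\eta}[\lambda T_w g] \leq (\|\chi\|_1/\mu_{0,\pi}(\chi)) \, I^{\eta}[\lambda \mu_{0,\pi}(\chi) g]$ in full generality for any admissible $\eta$, the corollary requires essentially nothing more than checking the hypotheses and rewriting both sides in the exponential notation.

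First I would verify that $\eta_\alpha$ meets the standing assumptions on $\eta$ imposed in Section 2: namely $\eta_\alpha(0) = 0$, $\eta_\alpha(v) > 0$ for $v > 0$, monotonicity on $\mathbb{R}_0^+$, and convexity. The first three properties are immediate from $e^0 = 1$ and the monotonicity of both $v \mapsto v^\alpha$ and the exponential function. Convexity, for the values of $\alpha$ relevant to the exponential Orlicz framework considered in the paper, follows from a direct second-derivative computation showing $\eta_\alpha'' \geq 0$ on $(0, \infty)$.

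Next I would unwind the definition of the modular: for any measurable $h$,
\[
I^{\eta_\alpha}[\lambda h] = \int_{\mathbb{R}} \eta_\alpha(\lambda |h(y)|) \, dy = \int_{\mathbb{R}} \left( \exp\left\{(\lambda |h(y)|)^\alpha\right\} - 1 \right) dy,
\]
and then apply Theorem \ref{thm2} with $\eta = \eta_\alpha$, so that the left-hand side becomes the integral involving $|T_w g|$ and the right-hand side becomes the integral involving $\mu_{0,\pi}(\chi) |g|$. This yields the stated inequality verbatim, for each $\lambda > 0$.

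Finally, to justify well-definedness and the membership assertion $T_w g \in L^{\eta_\alpha}(\mathbb{R})$, start from $g \in L^{\eta_\alpha}(\mathbb{R})$ and select $\lambda_0 > 0$ with $I^{\eta_\alpha}[\lambda_0 g] < \infty$; setting $\lambda := \lambda_0/\mu_{0,\pi}(\chi)$ makes the right-hand side of the established inequality finite, which forces $I^{\eta_\alpha}[\lambda T_w g] < \infty$ and hence $T_w g \in L^{\eta_\alpha}(\mathbb{R})$. There is no substantive obstacle in this proof; the only care point worth flagging is that because $\eta_\alpha$ fails the $\Delta_2$-condition, the inequality is valid for each fixed $\lambda > 0$ individually, but, in contrast with the $L^p$ and $L\log L$ settings of Corollaries \ref{col3}--\ref{col4}, the simultaneous finiteness of both sides requires $\lambda$ to be compatible with the scale witnessing $g \in L^{\eta_\alpha}(\mathbb{R})$.
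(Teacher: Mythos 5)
Your proposal is correct and follows exactly the route the paper intends: Corollary \ref{col5} is stated without proof as a direct instance of Theorem \ref{thm2} with $\eta = \eta_{\alpha}$, and your verification of the hypotheses, the unwinding of the modular $I^{\eta_{\alpha}}$, and the finiteness argument giving $T_w g \in L^{\eta_{\alpha}}(\mathbb{R})$ supply precisely the details the paper omits. The one caveat worth recording is that $\eta_{\alpha}(v) = e^{v^{\alpha}} - 1$ fails to be convex near the origin when $0 < \alpha < 1$ (so Theorem \ref{thm2}'s use of Jensen's inequality needs the usual equivalent-Young-function modification there), a point the paper itself glosses over and which your hedge about the ``relevant values of $\alpha$'' appropriately flags.
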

\text{The following corollary is an immediate consequence of Theorem
\ref{thm3}.}
\begin{corollary}\label{col6}
   $\text{For each } g \in L^{\eta_{\alpha}}(\mathbb{R}), \text{ there exists a positive constant } \lambda > 0 \text{ such that}$
    \[\int_{\mathbb{R}} \left( \exp\left\{ \left(\lambda |T_w g(y)-g(y)|\right)^{\alpha} \right\} - 1 \right) \, dy=0.\]
\end{corollary}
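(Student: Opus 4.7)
The plan is to derive Corollary \ref{col6} as a direct specialization of Theorem \ref{thm3} to the exponential $\eta$-function $\eta_{\alpha}(v) = \exp(v^{\alpha}) - 1$. First, I would verify that $\eta_{\alpha}$ meets all the structural assumptions imposed on an $\eta$-function in the Preliminaries, namely $\eta_{\alpha}(0) = 0$, positivity and monotonicity on $\mathbb{R}_0^+$, and convexity, which holds once $\alpha \geq 1$ (for $0 < \alpha < 1$ one replaces $\eta_{\alpha}$ by an equivalent convex modular, as is standard in the theory of exponential Orlicz spaces). This is precisely the setup in which the space $L^{\eta_{\alpha}}(\mathbb{R})$ and its modular $I^{\eta_{\alpha}}$ were introduced just before the corollary.

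Second, I would record the explicit form of the modular functional: by definition,
\[
I^{\eta_{\alpha}}[\lambda h] = \int_{\mathbb{R}} \left( \exp\{ (\lambda |h(y)|)^{\alpha} \} - 1 \right) dy,
\]
so that the integral appearing in the conclusion of the corollary is exactly $I^{\eta_{\alpha}}[\lambda(T_w g - g)]$. Thus the statement to be proved is precisely the assertion that there exists $\lambda > 0$ with $\lim_{w \to \infty} I^{\eta_{\alpha}}[\lambda(T_w g - g)] = 0$.

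Third, I would invoke Theorem \ref{thm3} with $\eta = \eta_{\alpha}$ and $g \in L^{\eta_{\alpha}}(\mathbb{R})$: the theorem furnishes a positive constant $\lambda > 0$ for which $\lim_{w \to \infty} I^{\eta_{\alpha}}[\lambda(T_w g - g)] = 0$, which on unwinding the definition is precisely the claim.

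There is essentially no obstacle here, as the corollary is a transparent transcription of the general modular convergence result in the exponential setting; the only subtlety worth flagging in the write-up is that, in contrast with Corollaries \ref{col2} and \ref{col4}, the function $\eta_{\alpha}$ fails the $\Delta_2$-condition, so the constant $\lambda > 0$ genuinely depends on $g$ and cannot in general be replaced by an arbitrary positive number. This is why only modular, and not Luxemburg-norm, convergence can be extracted from Theorem \ref{thm3} in this framework.
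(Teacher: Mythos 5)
Your proposal matches the paper exactly: the paper states Corollary~\ref{col6} as an immediate consequence of Theorem~\ref{thm3} applied with $\eta=\eta_{\alpha}$, which is precisely your argument of unwinding the modular $I^{\eta_{\alpha}}$ and invoking the general modular convergence result (reading the displayed equality as a limit as $w\to\infty$, as the paper clearly intends). Your added remarks on the convexity of $\eta_{\alpha}$ for small $\alpha$ and on the failure of the $\Delta_2$-condition are correct and, if anything, more careful than the paper's treatment.
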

\section{Applications and examples with graphical representations}
Next, we analyze the sampling series \( T_w(x) \), which is constructed using following Fejér's kernel:
\[
F(y)=\frac{1}{2}sinc^2\left(\frac{y}{2}\right),\,\,\,\,\,\,\,(y\in \mathbb{R})
\]
where the sinc function and its Fourier transform, interpreted in the context of the \( L^2 \)-space, are defined as follows:
\[
\text{sinc}(y) :=
\left\{
\begin{array}{ll}
   \frac{\sin(\pi y)}{\pi y}, & y \in \mathbb{R} \setminus \{0\}, \\
   1, & y = 0.
\end{array}
\right.
\quad
\mathcal{F}(\text{sinc}(y)) =
\text{rect}(\mu) :=
\left\{
\begin{array}{ll}
   1, & |\mu| \leq \pi, \\
   0, & |\mu| > \pi.
\end{array}
\right.
\]
\begin{figure}[h] 
    \centering
    \includegraphics[width=0.7\textwidth]{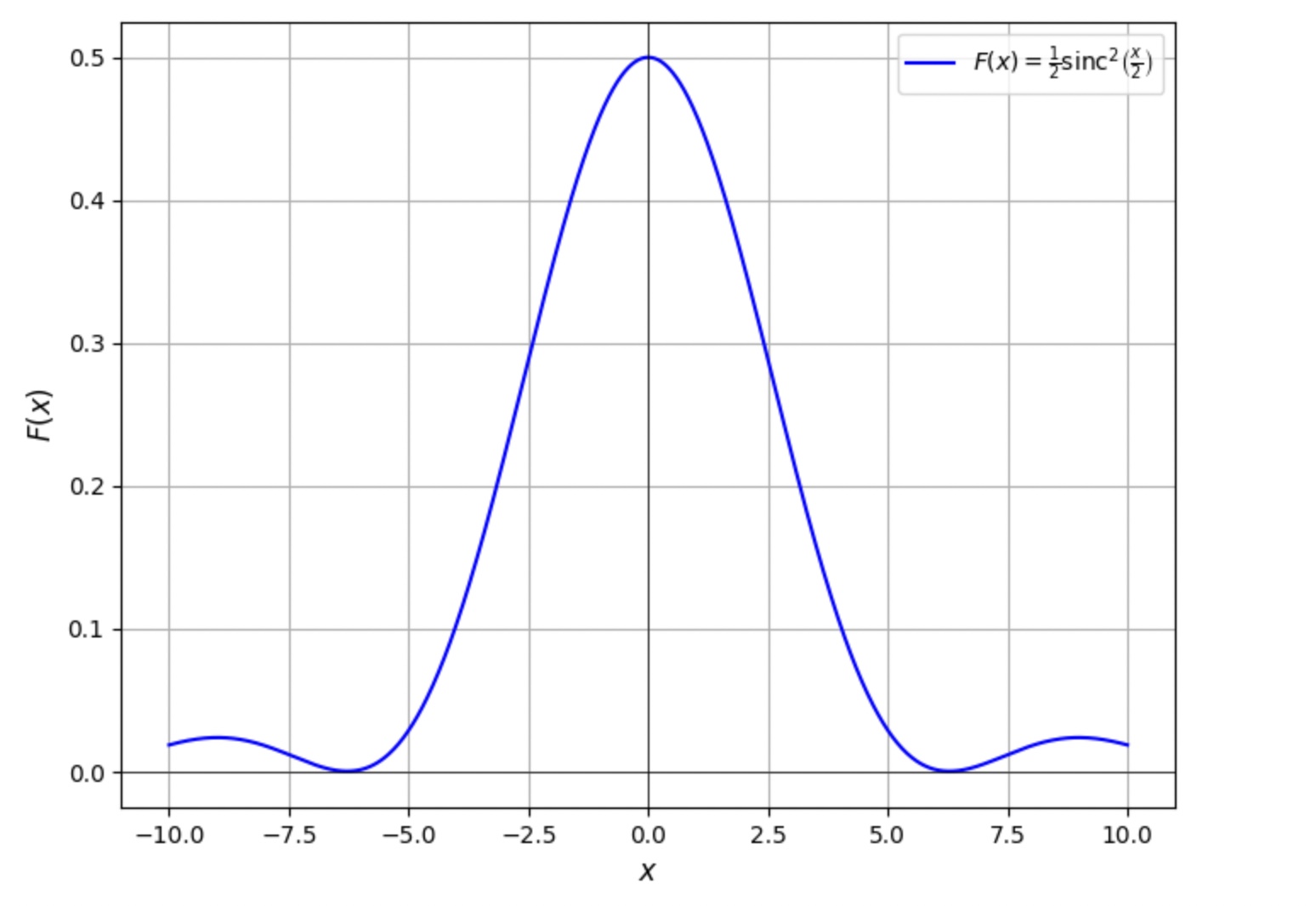} 
    \caption{Graph of \( F(y) = \frac{1}{2} \text{sinc}^2\left(\frac{y}{2}\right) \)} 
    \label{fig:screenshot} 
\end{figure}
Clearly, the function \( F \) belongs to \( L^1(\mathbb{R}) \), and fulfills the asymptotic condition
\[
F(y) = \frac{1}{2} \, \text{sinc}^2\left( \frac{y}{2} \right) = O\left( \frac{1}{y^2} \right) \quad \text{as} \quad y \to \infty.
\]
Consequently, there exist a constant \( M \) such that whenever \( |i - u| > N \), the following inequality holds:
\[
|F(z - i)| |z - m|^\beta \leq \frac{M}{|z - i|^2} |z - i|^\beta = \frac{M}{|z - i|^{2 - \beta}}.
\]
Additionally, since \( F(x) \) is smooth and continuous, there exists a constant \( C_1 \) (depending on \( N \)) such that for \( |z - i| \leq N \), we have
\[
|F(z -i)| \leq C_1 \quad \text{for all} \quad |z - i| \leq N.
\]

Hence, we can bound the sum as follows:
\[
\sum_{i\in \mathbb{Z}} |F(z - i)| |z - i|^\beta = \sum_{|i- u| \leq N} |F(z - i)| |z - i|^\beta + \sum_{|i - z| > N} |F(z - i)| |z - i|^\beta.
\]
For the first term, since the sum involves a finite number of terms, it is clearly finite. For the second term, we use the estimate derived earlier:
\[
\sum_{|i- z| > N} |F(z - i)| |z - i|^\beta \leq \sum_{|i- z| > N} \frac{M}{|z - i|^{2 - \beta}}.
\]
This second sum converges provided that \( 2 - \beta > 1 \), or equivalently, \( \beta < 1 \), because the terms decay like \( \frac{1}{|i|^{2 - \beta}} \), and the series \( \sum_{i = N}^\infty \frac{1}{i^{2 - \beta}} \) converges for \( \beta < 1 \).

Thus, \( F \) satisfies the moment condition (\ref{eql2}) with \( \beta < 1 \).

Then, by applying the Poisson summation formula, we obtain the following result:
\[
\sum_{i \in \mathbb{Z}} F(z - i) = \sum_{i \in \mathbb{Z}} \mathcal{F}(F(2\pi i)),
\]
here \( \mathcal{F}(f) \) refers to the Fourier transform of the function \( f \), and 
\[
\mathcal{F}(F(y)) = 
\begin{cases} 
1 - \left| \frac{y}{\pi} \right|, & \text{if} \quad |y| \leq \pi, \\
0, & \text{if} \quad |y| > \pi.
\end{cases}
\]
\vspace{1em}
From the equations above, it is evident that \( F \) satisfies the moment condition (\ref{eql1}).\\
The Kantorovich-type sampling series \ref{eq2} of $f\in L^p(\mathbb{R}), 1\leq p<\infty$ now becomes:
\begin{eqnarray}
   (T_w^F f)(y) = \sum_{i \in \mathbb{Z}} \left[ \int_0^1 f \left( \frac{i+ t^\alpha}{w + 1} \right) \, dt \right]\frac{1}{2}sinc^2\left(\frac{wy-i}{2}\right), \quad \text{for} \, y \in \mathbb{R}. 
\end{eqnarray}
The results of Corollaries \ref{col2}, \ref{col4}, and \ref{col6} are applicable to \( T_w^F \).
\vspace{1em}
Next, we apply the sampling operator \( T_w^F \) to two separate discontinuous functions, namely:
\[
f_1(y) :=
\left\{
\begin{array}{ll}
1, & |y|\leq 1, \\
   0, & otherwise.
\end{array}
\right.
\]
and 
the function \( f_2(x) \) is defined as:
\[
f_2(y) :=
\begin{cases}
\quad\frac{9}{y^2}, &\quad x < -3, \\[5pt]
\quad2, & -3 \leq y < -2, \\[5pt]
-\frac{1}{2}, & -2 \leq y < -1, \\[5pt]
\quad\frac{3}{2}, & -1 \leq y < 0, \\[5pt]
\quad1, & \quad0 \leq y < 1, \\[5pt]
-1, & \quad1 \leq y < 2, \\[5pt]
\quad0, & \quad 2 \leq y < 3, \\[5pt]
-\frac{50}{y^4}, & \quad3 \leq y.
\end{cases}
\]
\clearpage
\begin{figure}[h]
    \centering
    \includegraphics[width=0.7\textwidth]{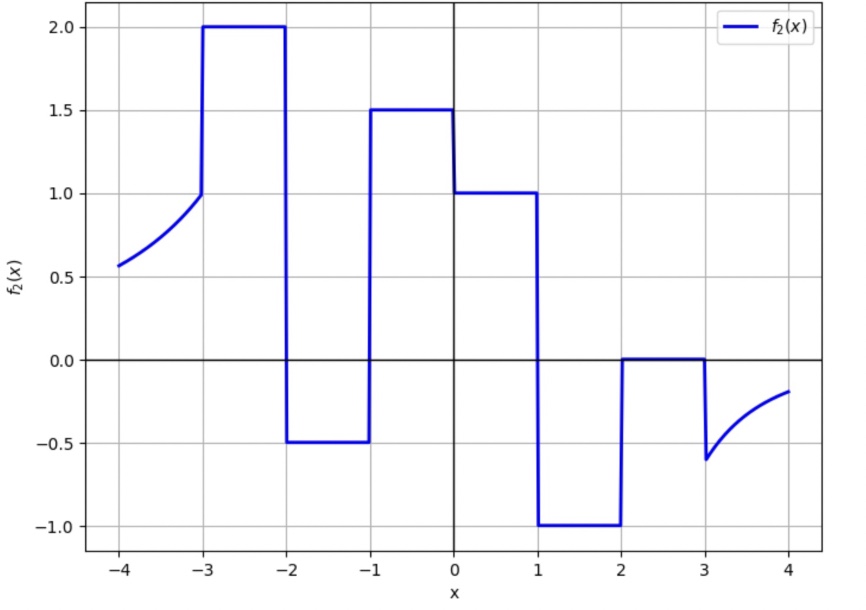} 
    \caption{Graph of the function $f_2$} 
    \label{fig:screenshot} 
\end{figure}
The Kantorovich-type sampling series $(\ref{eq2})$ using Fejér's Kernel (\( F \)) approximates functions by integrating over an infinite range, but the kernel’s infinite support complicates the process, requiring an infinite number of terms. For functions with \textbf{compact support} (like \( f_1 \)), only a finite number of terms contribute significantly, making the series tractable. However, for functions with \textbf{infinite support} (like \( f_2 \)), an infinite number of terms are relevant, and the series must be truncated, introducing \textbf{truncation error}.\\
The approximation to function $f_1$ and $f_2$ by the operators $S_w^F$ and $T_w^F$ are shown in the figures $\ref{fig4}$, $\ref{fig5}$, $\ref{fig6}$ and $\ref{fig7}$.  We can see that $T_w^F$ presents an order of approximation better than the operator $S_w^F$.\\
Also figures \(\ref{fig5}\) and \(\ref{fig7}\) illustrate how truncation error decreases as the sampling parameter \( w \) increases, but a perfect approximation is unattainable without considering an infinite number of terms. Truncation introduces a balance between computational efficiency and accuracy, with a larger \( w \) improving the approximation at the cost of increased computation.
\vfill
\begin{figure}[h]
    \centering
    \includegraphics[width=0.7\textwidth]{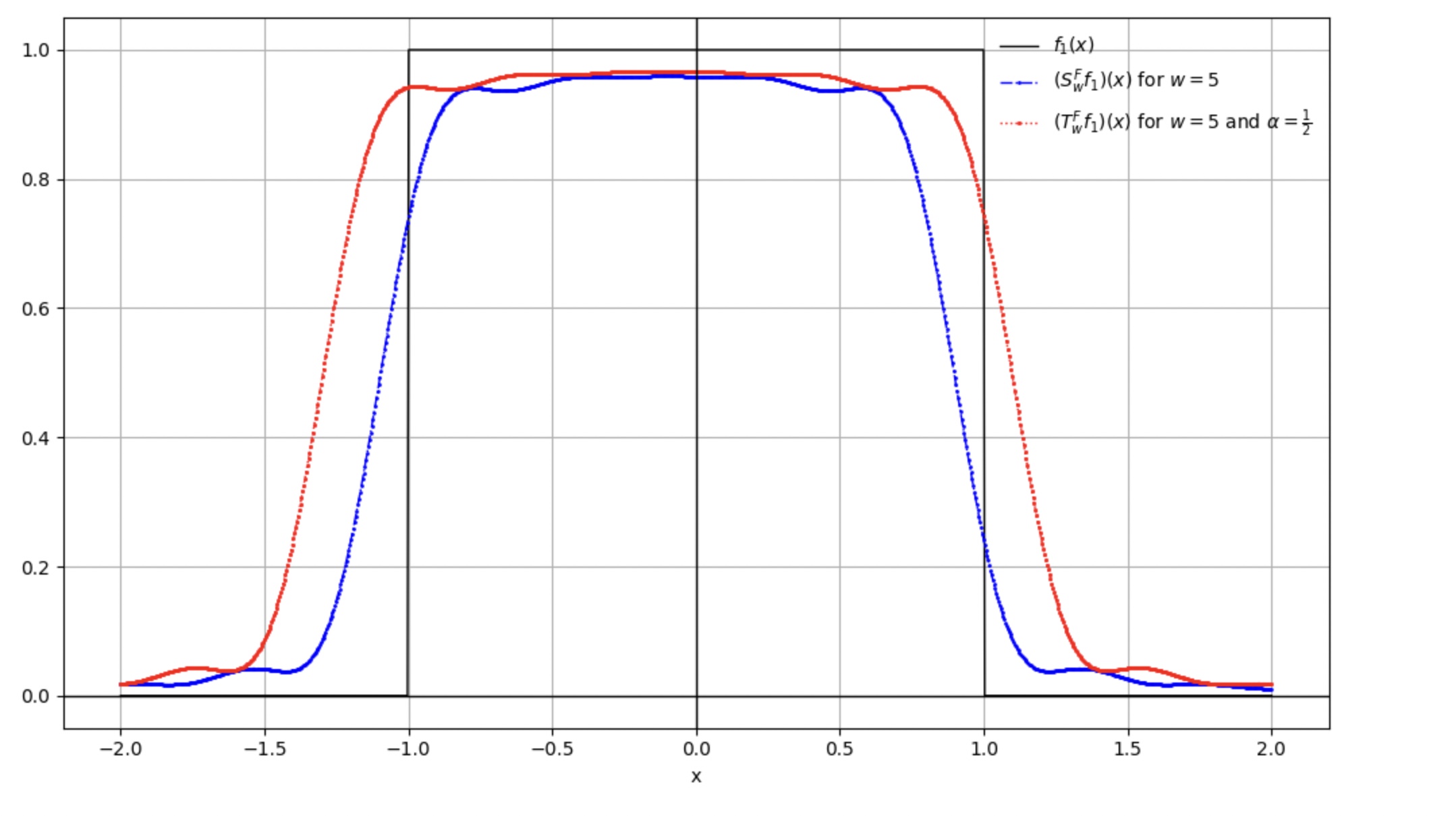} 
    \caption{Comparison of $f_1(x)$, $(S_w^F f_1)(x)$ and $(T_w^F f_1)(x)$ for $w = 5$ and $\alpha = 1/2$} 
    \label{fig4} 
\end{figure}
\begin{figure}[h]
    \centering
    \includegraphics[width=0.7\textwidth]{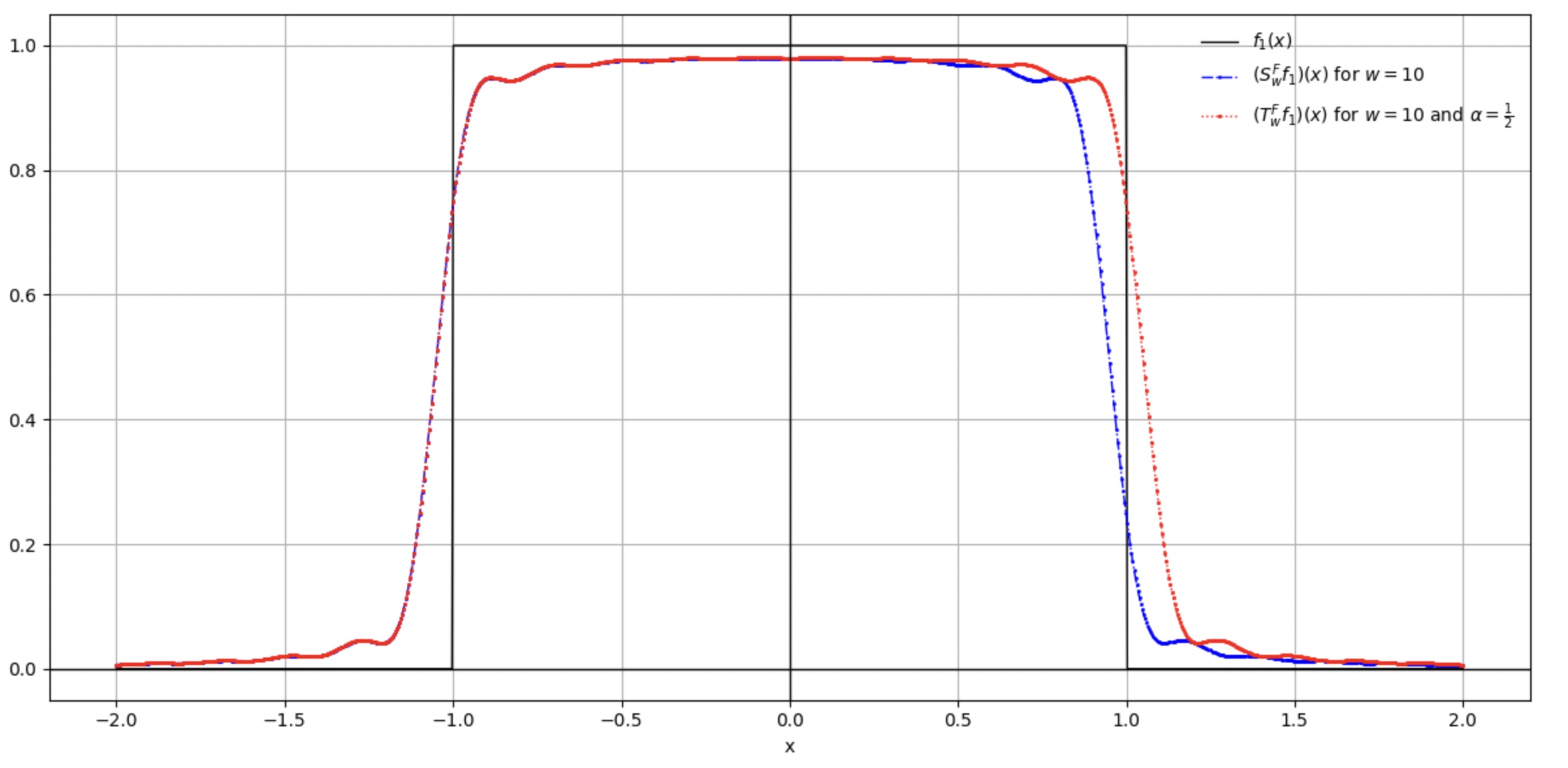} 
    \caption{Comparison of $f_1(x)$, $(S_w^Ff_1)
    (x)$ and $(T_w^F f_1)(x)$ for $w = 10$ and $\alpha = 1/2$} 
    \label{fig5} 
\end{figure}
\begin{figure}[h]
    \centering
    \includegraphics[width=0.7\textwidth]{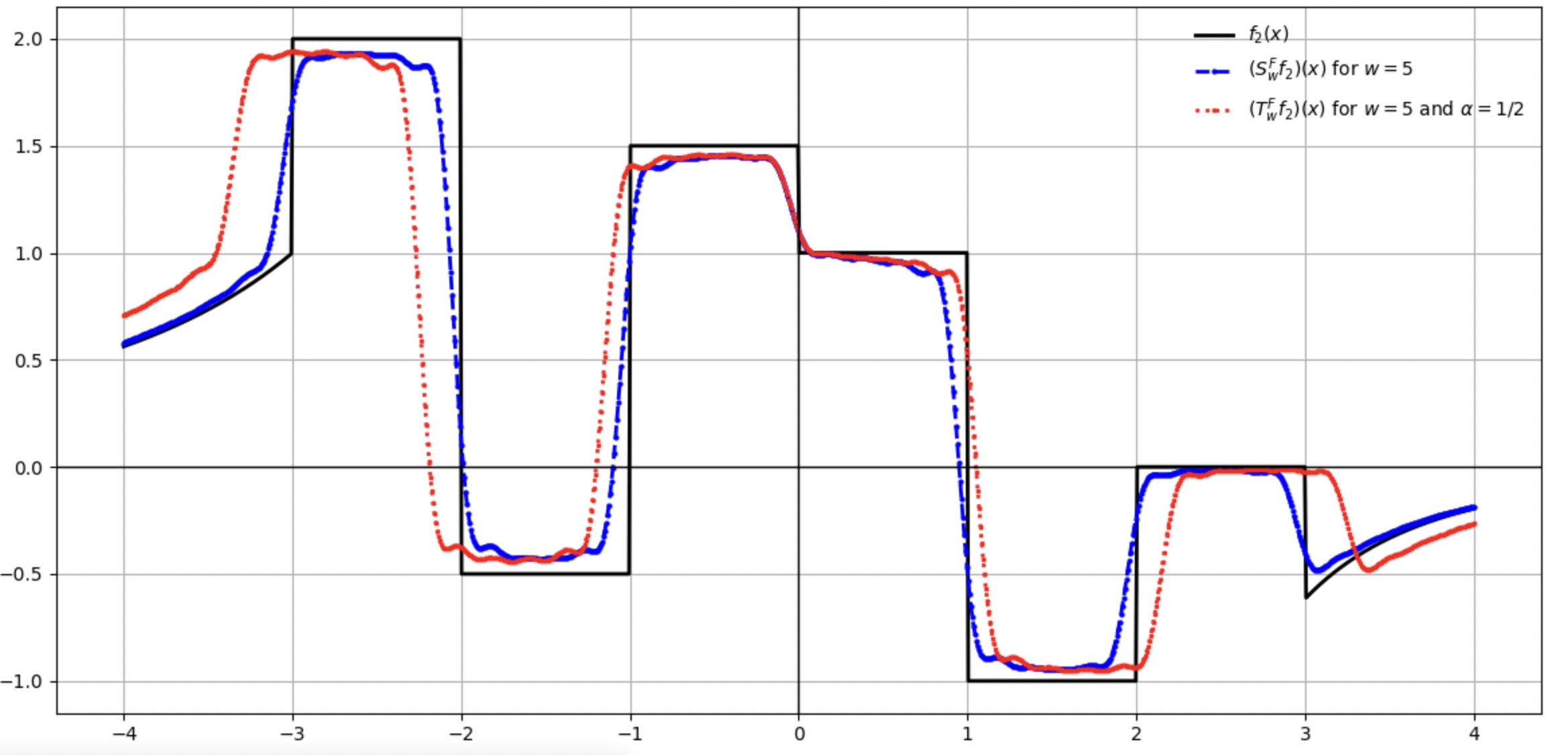} 
    \caption{Comparison of $f_2(x)$, $(S_w^Ff_2)
    (x)$ and $(T_w^F f_2)(x)$ for $w =5$ and $\alpha = 1/2$} 
    \label{fig6} 
\end{figure}
\begin{figure}[h]
    \centering
    \includegraphics[width=0.7\textwidth]{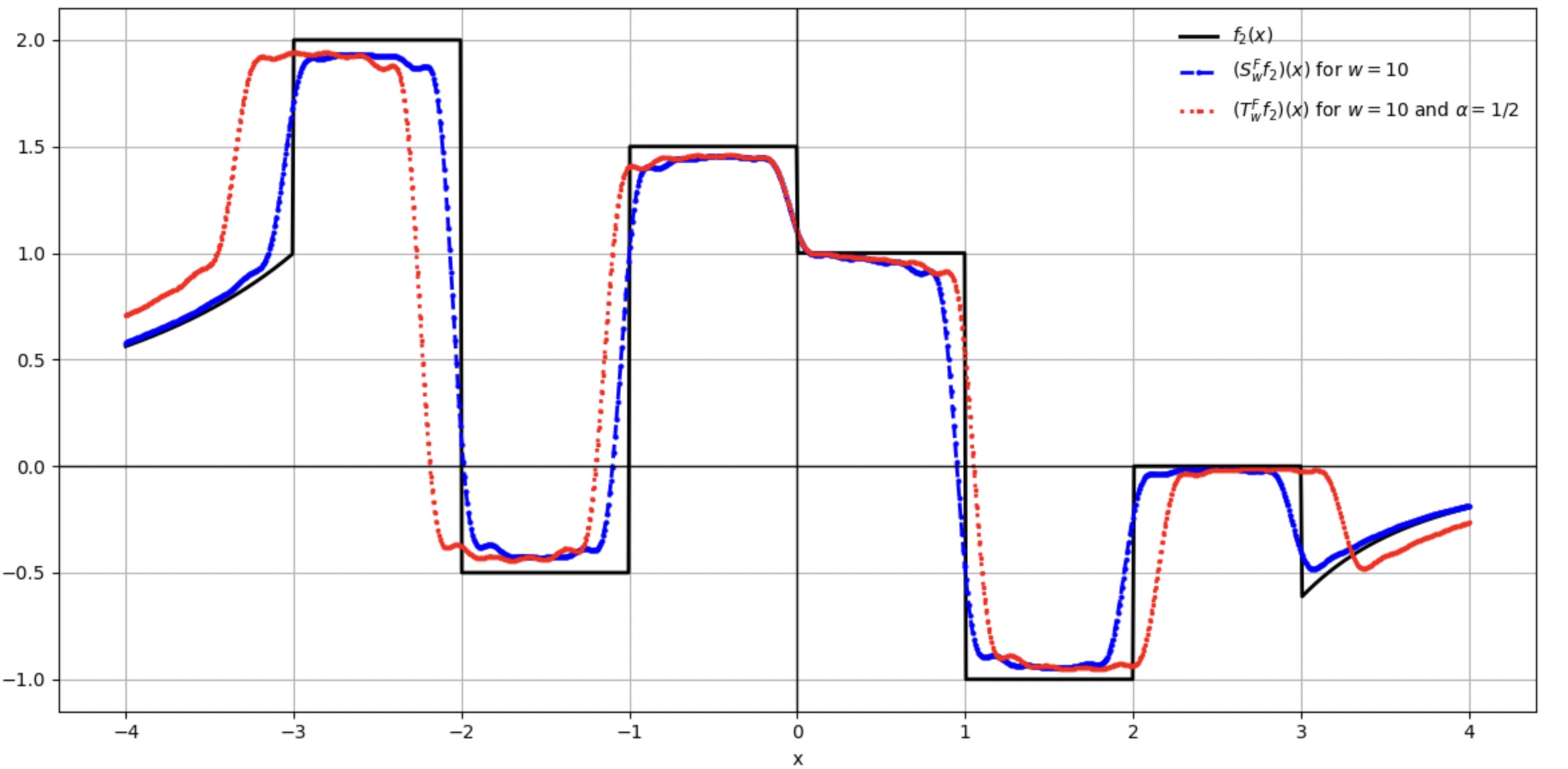} 
    \caption{Comparison of $f_2(x)$, $(S_w^Ff_2)
    (x)$ and $(T_w^Ff_2)(x)$ for $w =10$ and $\alpha = 1/2$} 
    \label{fig7} 
\end{figure}
To minimize the truncation error, kernels \( \chi(x) \) can be selected that decrease more rapidly as \( x \to \pm \infty \). A family of such kernels is the class of generalized Jackson kernels, defined as:
\[
J_n(x) = c_n \, \text{sinc}^{2n} \left( \frac{x}{2n\pi \alpha} \right), \quad (x \in \mathbb{R}), \quad c_n := \left( \int_{-\infty}^{\infty} \text{sinc}^{2n} \left( \frac{u}{2n\pi \alpha} \right) \, du \right)^{-1},
\]

where \(n \in \mathbb{N}\) and \(\alpha \geq 1\). These kernels are bandlimited to the frequency range \( \left[ -\frac{1}{\alpha}, \frac{1}{\alpha} \right] \), implying that their Fourier transform vanishes outside this range. Consequently, the conditions in equations $(\ref{eql1})$ and $(\ref{eql2})$ are satisfied , as noted in Remark \ref{re1} \ref{itb}, \ref{itc}. The results of Corollaries \ref{col1}, \ref{col4}, and \ref{col5} are applicable to \( S_w^F \) where the Fejér kernel \( F \) is substituted by the generalized Jackson kernel \( J_n \).
\\
To completely eliminate the truncation error, it is essential to consider kernels \(\chi\) that possess bounded support. One of the most practical and commonly used examples of such kernels are the \textit{B-splines} of order \(n \in \mathbb{N}\), which are defined as:
\[
M_n(y)=\frac{1}{(n-1)!}\sum_{m=0}^n (-1)^m \binom{n}{m}\left(\frac{n}{2}+y-m\right)^{n-1}_+
\]
with Fourier transform given by
\[
\mathcal{M}_n(u)=sinc^n\left(\frac{u}{2\pi}\right)\,\,\,(u\in \mathbb{R}).
\]
For every \( n \in \mathbb{N} \), the functions \( M_n \) are bounded on \( \mathbb{R} \) and have compact support contained within the interval \( \left[ -\frac{n}{2}, \frac{n}{2} \right] \).
This implies that \( M_n \in L^1(\mathbb{R}) \), and the moment condition \((\ref{eql2})\) holds for all \( \beta > 0 \).
Moreover, the normalization condition \((\ref{eql1})\) is satisfied, as detailed in Remark \ref{re1} \ref{itb}.
 Consequently, the results from Corollaries \ref{col1}, \ref{col4}, and \ref{col5} follow for \( S_w^{M_n} \).\\
Let us examine the case \( n = 3 \) in more detail. The B-spline \( M_3 \) is defined as:
\[
M_3(y) :=
\begin{cases}
\frac{3}{4}-y^2, &|y|\leq \frac{1}{2}, \\[5pt]
\frac{1}{2}\left(\frac{3}{2}-|y|\right)^2, & \frac{1}{2} \leq |y|\leq \frac{3}{2}, \\[5pt]
0, & |y|>\frac{3}{2}. 
\end{cases}
\]
\vfill
\begin{figure}[h] 
    \centering
    \includegraphics[width=0.7\textwidth]{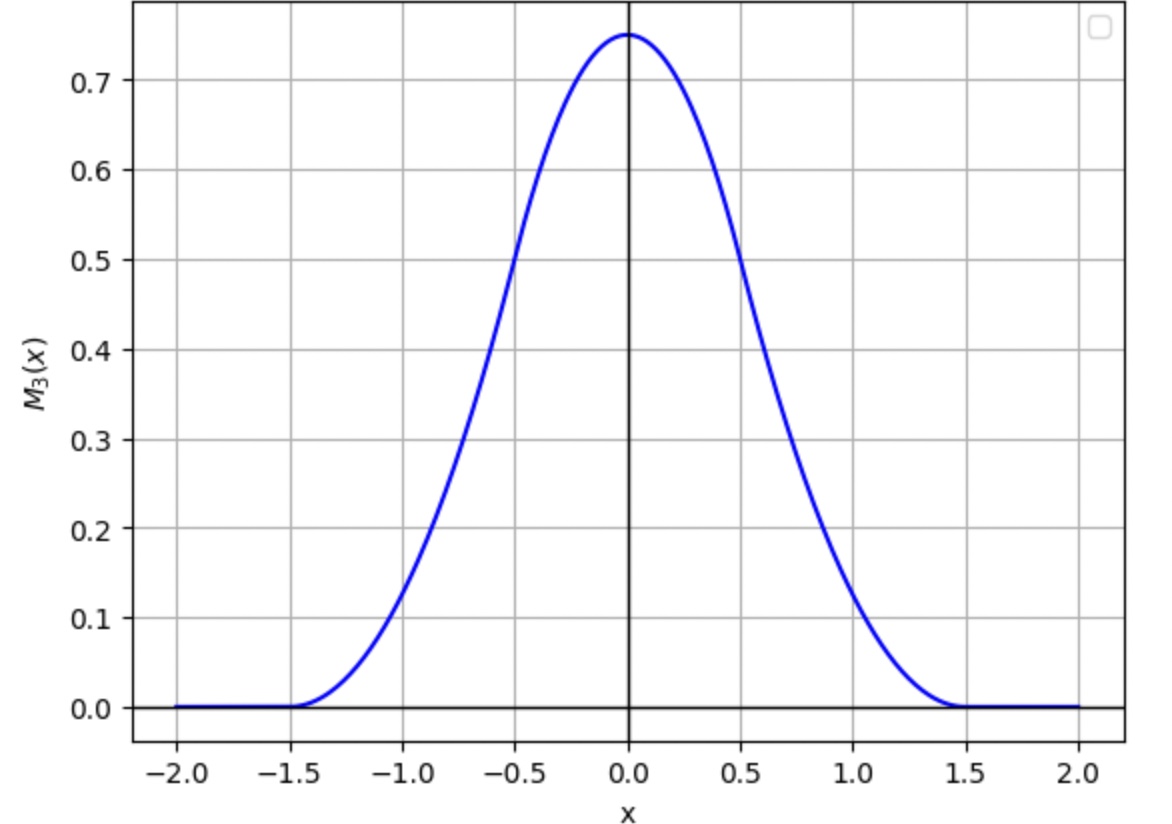} 
    \caption{The Spline Kernel $M_3.$} 
    \label{fig:screenshot} 
\end{figure}
The Kantorovich-type sampling series, $S_w^{M_n}$ and $T_w^{M_n}$ with parameters \( w = 5 \) and \( w = 10 \), applied to the functions \( f_1 \) and \( f_2 \), produce noteworthy results. The plots $\ref{fig8}$, $\ref{fig9}$, $\ref{fig10}$ and $\ref{fig11}$ indicate that the approximation achieved using the series based on the spline kernel \( M_3 \) performs better than the one using the series based on Fejér's kernel, even when comparing the spline series with \( w = 5 \) to the Fejér kernel series for \( w = 10 \). This suggests that the series \( S_w^{M_3} f \) and \( T_w^{M_3} f \) can offer a more accurate approximation with fewer mean values of \( f \) than the series \( S_w^F \) and \( T_w^F \). Also it is noteworthy to note that $T_w^{M_3}$ presents an order of approximation better than the operator $S_w^{M_3}$.
\clearpage
\begin{figure}[h]
    \centering
    \includegraphics[width=0.9\textwidth]{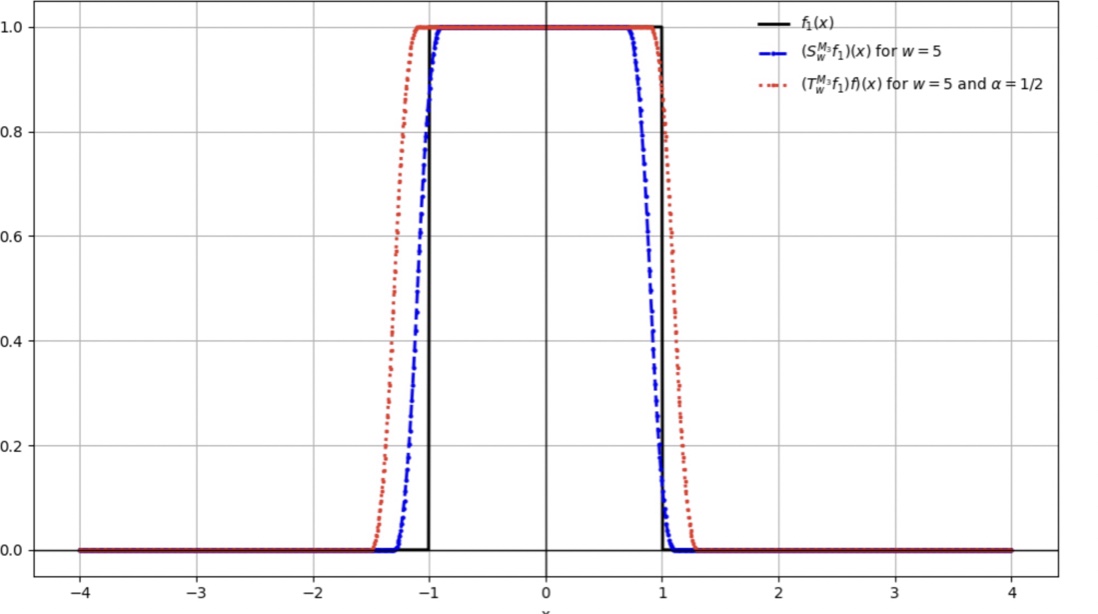} 
    \caption{Comparison of $f_1(x)$, $(S_w^{M_3}f_1)(x)$ and $(T_w^{M_3} f_1)(x)$ for $w = 5$ and $\alpha = 1/2$} 
    \label{fig8} 
\end{figure}
\begin{figure}[h]
    \centering
    \includegraphics[width=0.9\textwidth]{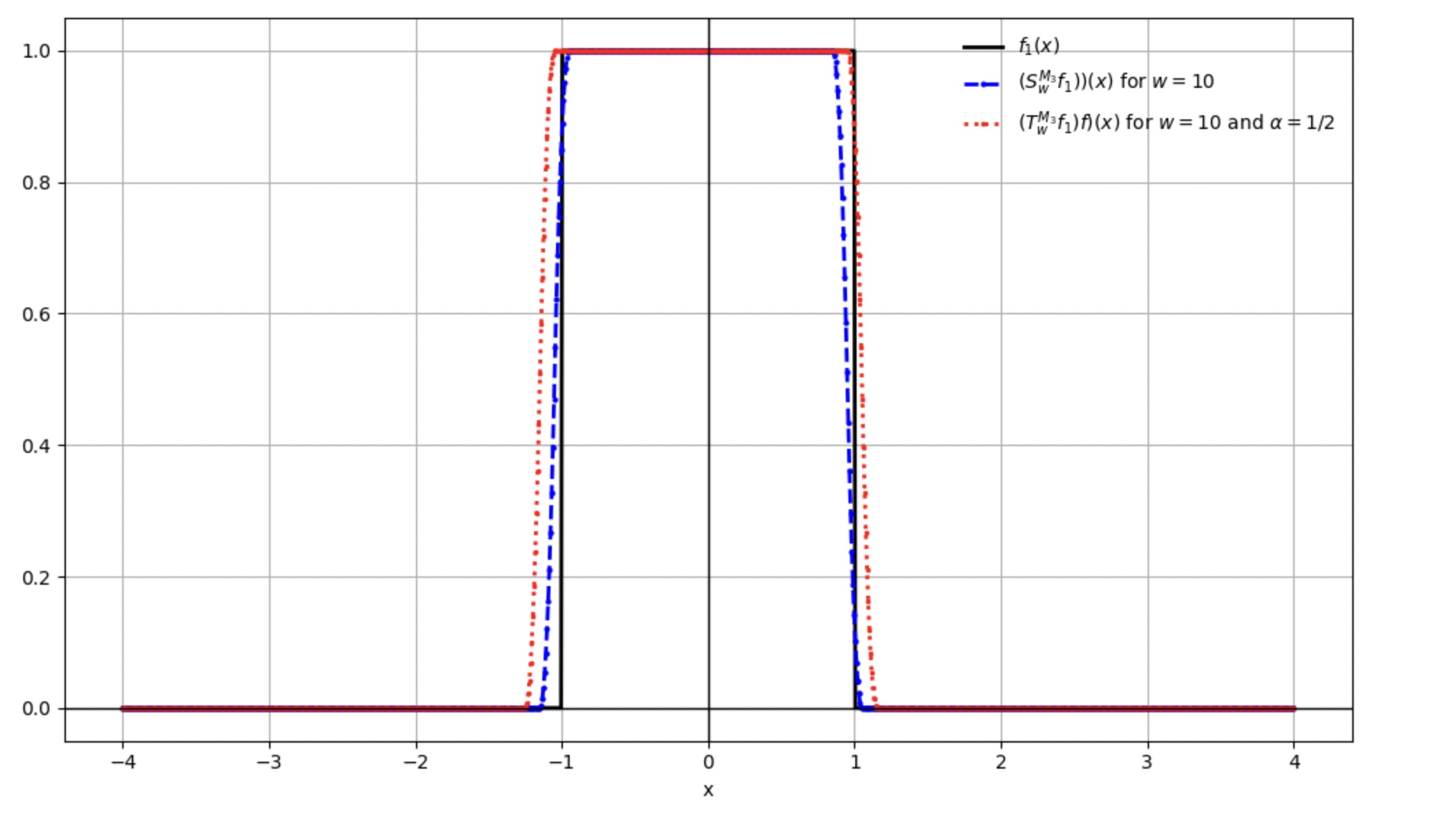} 
    \caption{Comparison of $f_1(x)$, $(S_w^{M_3}f_1)(x)$ and $(T_w^{M_3} f_1)(x)$ for $w = 10$ and $\alpha = 1/2$} 
    \label{fig9} 
\end{figure}
\begin{figure}[h]
    \centering
    \includegraphics[width=0.9\textwidth]{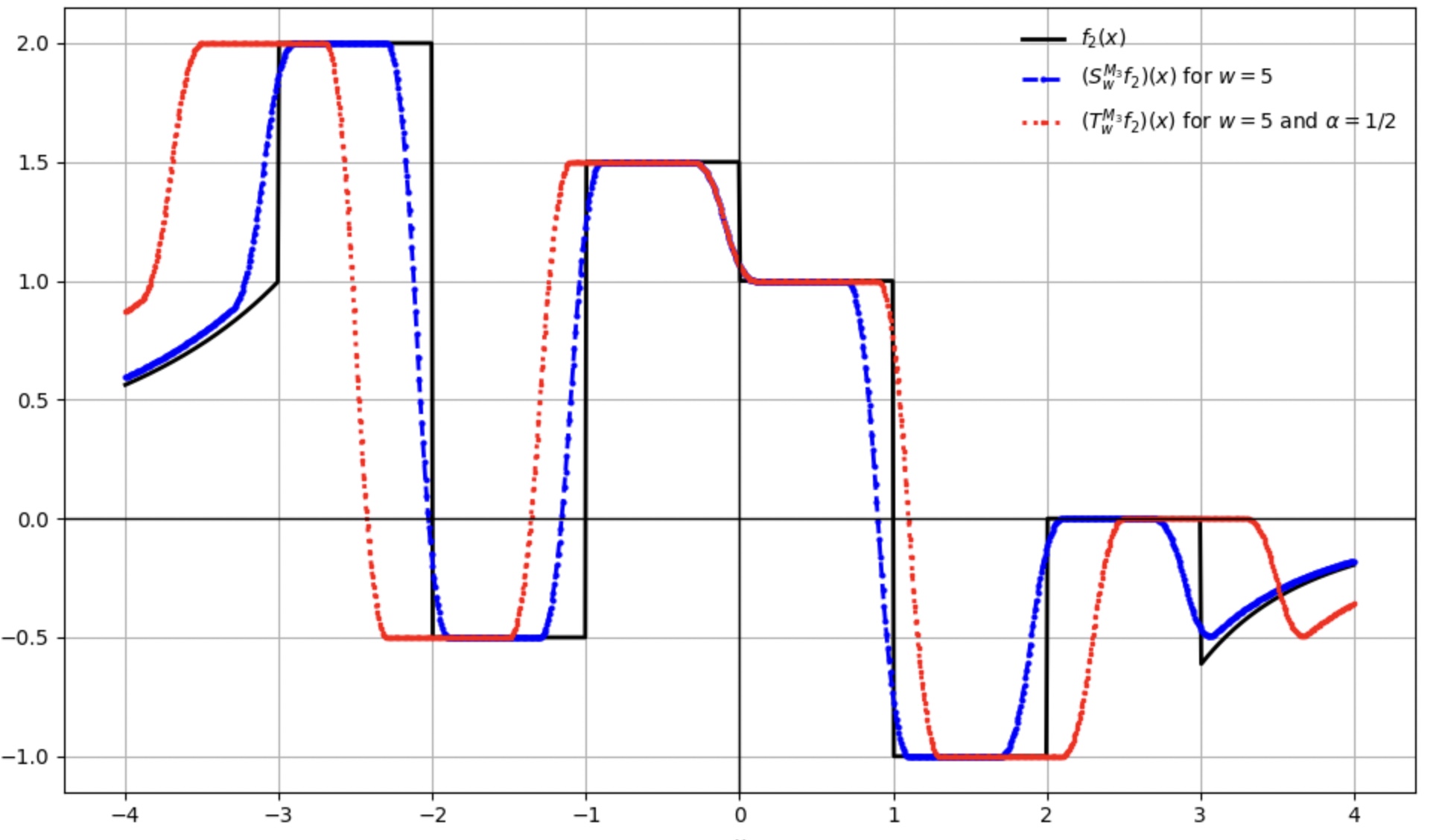} 
    \caption{Comparison of $f_2(x)$, $(S_w^{M_3}f_2)(x)$ and $(T_w^{M_3} f_2)(x)$ for $w =5$ and $\alpha = 1/2$} 
    \label{fig10} 
\end{figure}
\begin{figure}[h]
    \centering
    \includegraphics[width=0.9\textwidth]{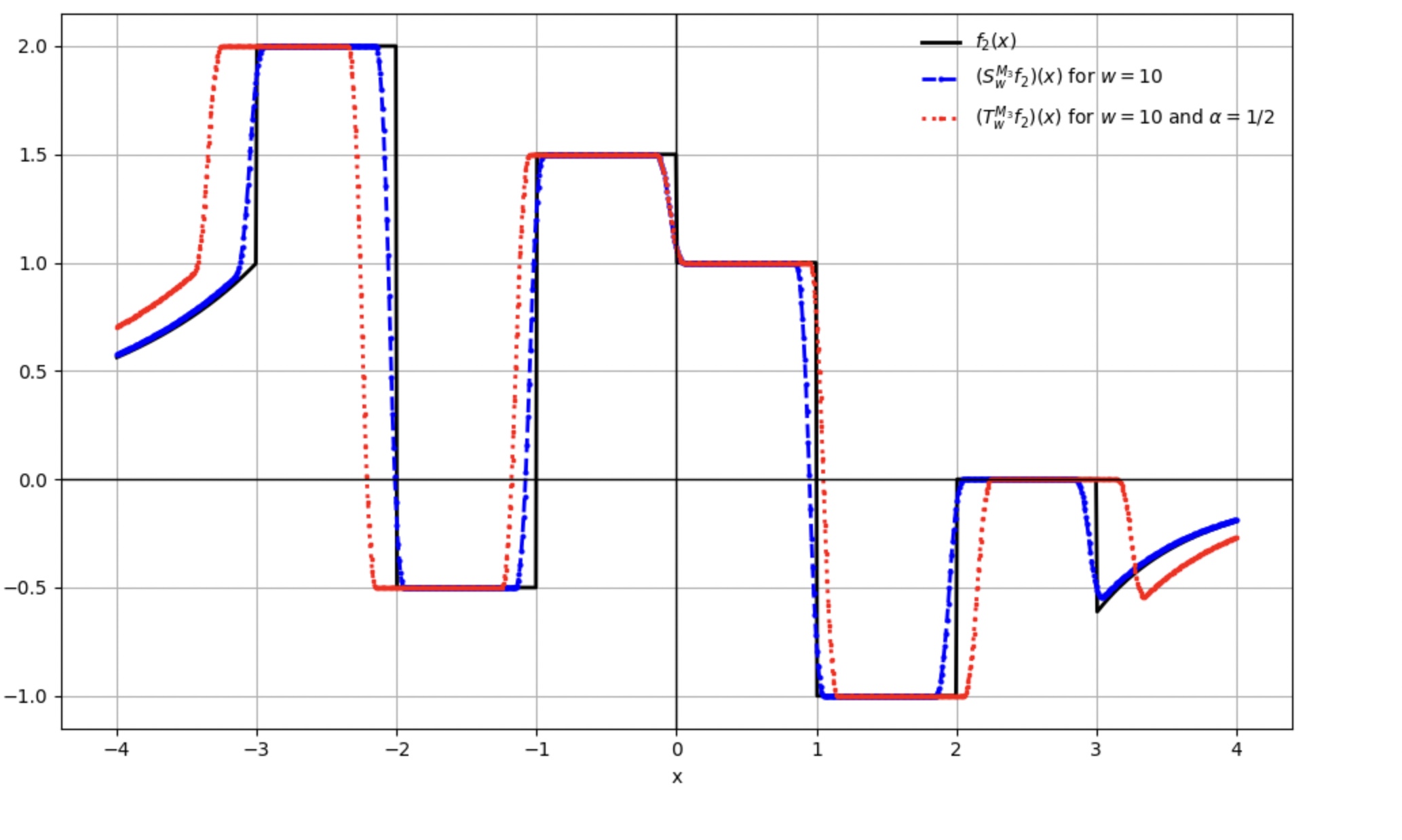} 
    \caption{Comparison of $f_2(x)$, $(S_w^{M_3}f_2)(x)$ and $(T_w^{M_3} f_2)(x)$ for $w =10$ and $\alpha = 1/2$} 
    \label{fig11} 
\end{figure}
\clearpage

\end{document}